\newtheorem{thm}{Theorem}[section]
\newtheorem{dfn}[thm]{Definition}
\newtheorem{rmk}[thm]{Remark}
\newtheorem{cor}[thm]{Corollary}
\newtheorem{prop}[thm]{Proposition}
\newtheorem{lem}[thm]{Lemma}
\newcommand{\C}{{{\mathbb C}}}
\newcommand{\R}{{{\mathbb R}}}
\newcommand{\Q}{{{\mathbb Q}}}
\newcommand{\Z}{{{\mathbb Z}}}
\newcommand{\N}{{{\mathbb N}}}
\newcommand{\Hh}{{{\mathbb H}}}
\newcommand{\F}{{{\mathcal F}}}
\newcommand{\Ff}{{{\mathbb F}}}
\newcommand{\Ss}{{{\mathcal S}}}
\newcommand{\Cay}{{{\rm Cay}}}
\newcommand{\M}{{{\mathcal M}}}
\newcommand{\Aut}{{{\rm Aut}}}
\newcommand{\ord}{{{\rm ord}}}
\newcommand{\D}{{{\mathbb D}}}
\newcommand{\red}{\textcolor{red}}
\begin{document}

\title[Trivalent expanders and hyperbolic surfaces]{Trivalent expanders, 
$\mathbf{(\Delta-Y)}$-transformation, and hyperbolic surfaces}

\author{I. Ivrissimtzis, N. Peyerimhoff and A. Vdovina}

\address{Durham University, DH1 3LE, Great Britain}
\email{ioannis.ivrissimtzis@durham.ac.uk}
\email{norbert.peyerimhoff@durham.ac.uk} 
\address{University of Newcastle, NE1 7RU, Great Britain} 
\email{alina.vdovina@ncl.ac.uk}

\date{December 2015}
\subjclass[2010]{20F65 (primary) 05C25, 05C50 (secondary).}

\begin{abstract}
  We construct a new family of trivalent expanders tessellating
  hyperbolic surfaces with large isometry groups. These graphs are
  obtained from a family of Cayley graphs of nilpotent groups via
  $(\Delta-Y)$-transformations. We compare this family with Platonic
  graphs and their associated hyperbolic surfaces and see that they
  are generally very different with only one hyperbolic surface in the
  intersection. Moreover, we study combinatorial, topological and
  spectral properties of our trivalent graphs and their associated
  hyperbolic surfaces.
\end{abstract}

\maketitle


\section{Introduction and statement of results}

In this article, we consider a family of surface tessellations with
interesting spectral gap properties. More precisely, we construct a
family of trivalent graphs $T_k$ ($k \ge 2$), which tessellate closed
hyperbolic surfaces $\Ss_k$ with large isometry groups, growing linear
on genus. The faces of these tessellations are regular hyperbolic
$2^{\lfloor \log_2 k \rfloor+2}$-gons and interior angles
$2\pi/3$. The graphs $T_k$ are ($\Delta-Y$)-transformations of Cayley
graphs of increasing $2$-groups $G_k$, and they form a family of
expanders (see Theorem \ref{thm:main} below for more details).

Using the Brooks-Burger transfer principle, the expansion properties
of the trivalent graphs $T_k \subset \Ss_k$ translate into a uniform
lower bound on the smallest positive Laplace eigenvalue
$\lambda_1(S_k)$ of the surfaces $\Ss_k$ (see Corollary
\ref{cor:lowerlambda1}). Prominent explicit examples where such an
interplay between groups, graphs and compact and non-compact
hyperbolic surfaces has been utilized, are finite quotients of
${\rm PSL}(2,\Z)$ and co-compact arithmetic lattices in
${\rm PSL}(2,\R)$ (see, e.g., Buser \cite{Bu2}, Brooks \cite{Br3}, and
Lubotzky \cite{Lub} and the references therein). While many finite
quotients of these examples are {\em simple}, all our finite groups
$G_k$ are {\em nilpotent} and very different in nature. We also show
that, while our graph $T_2$ in the surface $\Ss_2$ is dual to the
Platonic graph $\Pi_8$ (as defined below), there is no direct relation
between our family of graphs $T_k$ and these Platonic graphs $\Pi_N$
from $k \ge 3$ onwards. This fact is stated in Proposition
\ref{prop:isomt2s8thm:main}.

Another way to associate hyperbolic surfaces to the trivalent graphs
$T_k$ is to replace the vertices by hyperbolic regular $Y$-pieces and
to glue their boundary cycles in accordance with the edges between the
vertices of $T_k$, as explained in \cite{Bu1}. We denote the so
obtained hyperbolic surfaces by $\widehat \Ss_k$. For
these surfaces, Buser's results \cite{Bu1} yield a uniform lower bound
on $\lambda_1(\widehat \Ss_k)$ (see Corollary \ref{cor:lowerlambda2}).

Let us first give a brief overview over the construction of $T_k$ and
its spectral properties, and then go into further details. We start
with a sequence of $2$-groups $G_k$, following the construction in
\cite[Section 2]{PV}. Then we consider $6$-valent Cayley graphs $X_k$
of these groups and apply $(\Delta -Y)$-transformations in all
triangles of $X_k$, to finally obtain the trivalent graphs $T_k$. The
$(\Delta-Y)$-transformations are standard operations to simplify
electrical circuits, and were also used in \cite{BCdV} in connection
with Colin de Verdi{\'e}re's graph parameter. We have the following
relations between the spectra of the adjacency operators of the graphs
$X_k$ and $T_k$ which are proved in Section \ref{sec:mainres}. Note
that the spectra of the graphs $T_k$ are symmetric around the origin,
since these graphs are bipartite.

\begin{thm} \label{thm:main} Let $k \ge 2$. Then every eigenvalue
  $\lambda \in [-3,3]$ of $T_k$ gives rise to an eigenvalue $\mu =
  \lambda^2-3 \in [-3,6]$ on $X_k$. In particular, there exists a
  positive constant $C < 6$ such that
  \begin{itemize}
  \item[(i)] the graphs $X_k$ are $6$-valent expanders with
    spectrum in $[-3,C] \cup \{6\}$,
  \item[(ii)] the bipartite graphs $T_k$ are trivalent expanders with
    spectrum in $[-\sqrt{C+3},\sqrt{C+3}] \cup \{\pm 3 \}$.
  \end{itemize}
\end{thm}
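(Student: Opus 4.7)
My plan is to exploit the bipartite block structure of $A_{T_k}$ together with a clean identity relating it to $A_{X_k}$. Let $\mathcal{T}$ denote the set of triangles of $X_k$; the $(\Delta-Y)$-transformation replaces each triangle $t = \{u,v,w\}$ by a new vertex $c_t$ adjacent to $u,v,w$, so $T_k$ is bipartite with parts $V(X_k)$ and $\mathcal{T}$. Writing $B$ for the vertex--triangle incidence matrix, the adjacency operator of $T_k$ takes the block form
$$
A_{T_k} = \begin{pmatrix} 0 & B \\ B^{T} & 0 \end{pmatrix}.
$$
The first thing I would verify, using the Cayley description of $X_k$, is that every vertex of $X_k$ lies in exactly three triangles and every edge in exactly one. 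This is what forces $T_k$ to be $3$-regular and makes $B$ square, since it yields $|\mathcal{T}| = |V(X_k)|$.

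\textbf{The key identity and the spectral correspondence.} Next I would compute the $(v,v')$-entry of $BB^{T}$, which counts triangles of $X_k$ containing both $v$ and $v'$: it equals $3$ when $v = v'$, equals $1$ when $vv' \in E(X_k)$, and vanishes otherwise. Hence
$$
BB^{T} = 3I + A_{X_k}.
$$
From $A_{T_k}^2 = \mathrm{diag}(BB^{T}, B^{T}B)$ one reads off the first assertion: if $A_{T_k}(v_1, v_2)^{T} = \lambda (v_1, v_2)^{T}$ with $\lambda \neq 0$, then $v_1 \neq 0$ and $BB^{T} v_1 = \lambda^2 v_1$, so $\mu = \lambda^2 - 3$ is an eigenvalue of $A_{X_k}$. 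Squareness of $B$ forces $BB^{T}$ and $B^{T}B$ to share their spectrum, and bipartiteness of $T_k$ supplies the sign symmetry, so in fact $\mathrm{spec}(A_{T_k}) = \{\pm\sqrt{\mu + 3} : \mu \in \mathrm{spec}(A_{X_k})\}$.

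\textbf{Parts (i), (ii), and the main obstacle.} Since $BB^{T} \succeq 0$, the identity automatically places $\mathrm{spec}(A_{X_k})$ in $[-3,6]$, with the top eigenvalue $6$ being the valency. The real content of (i) is therefore the uniform spectral gap $6 - C > 0$, i.e.\ that $\{X_k\}$ is a family of expanders, and this is the main obstacle: it does not follow from the $(\Delta-Y)$ machinery alone. I would obtain it by appealing to the construction of the nilpotent $2$-groups $G_k$ in \cite{PV}, either via a Cheeger-type isoperimetric estimate for the Cayley graphs $X_k$ or via a representation-theoretic argument bounding the nontrivial eigenvalues of $A_{X_k}$ uniformly away from $6$. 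Once (i) is in hand, part (ii) is purely algebraic: the eigenvalue $\mu = 6$ contributes $\lambda = \pm 3$, while $\mu \in [-3, C]$ contributes $\lambda \in [-\sqrt{C+3}, \sqrt{C+3}]$, yielding the claimed containment for the spectrum of $T_k$ and its expander property.
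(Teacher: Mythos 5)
Your proposal is correct and follows essentially the same route as the paper: the block identity $BB^{T}=3I+A_{X_k}$ is exactly the paper's operator identity $A_{X_k}=(A_{T_k})^2-3$, and the positive semidefiniteness of $BB^{T}$ is the paper's observation that eigenvalues of $X_k$ in $[-6,-3)$ would force non-real eigenvalues of $T_k$. The one input you leave open — the uniform spectral gap for $X_k$ — is obtained in the paper by the representation-theoretic branch of your alternative: $G$ has Kazhdan property (T) (shown in \cite{PV}), so by \cite[Prop.~3.3.1]{Lub} the Cayley graphs of its finite quotients with respect to the fixed generating set $S$ form an expander family.
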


The finite groups $G_k$ are constructed as follows. We start with the 
infinite group $\widetilde G$ of seven generators and seven relations:
\begin{equation} \label{eq:G}
\widetilde G = \langle x_0,\dots,x_6 \mid x_i x_{i+1} x_{i+3} \ 
\text{for $i=0,\dots,6$} \rangle, 
\end{equation}
where the indices are taken modulo $7$. As explained in \cite[Thm
3.4]{CMSZ}, this group acts simply transitively on the vertices of a
thick Euclidean building of type $\tilde A_2$. Let $S = \{x_0^{\pm
  1},x_1^{\pm 1},x_3^{\pm 1}\}$. We consider the index two subgroup $G
\le \widetilde G$, generated by $S$. (Note that $x_3 = x_1^{-1}
x_0^{-1}$.) We use a representation of the group $G$ by infinite
(finite band) upper triangular Toeplitz matrices with special
periodicity properties.  Let $G^k$ be the finite index normal subgroup
of elements in $G$ whose matrices have vanishing first $k$ upper
diagonals. The groups $G_k$ are then the finite quotients $G/G^k$. The
details of this construction are given in Section
\ref{subsec:faithrep}.

The {\em finite width conjecture} in \cite{PV} asks whether the groups
$G_k$ have another purely abstract group theoretical description via
the {\em lower exponent-$2$ series}
$$ G = P_0(G) \ge P_1(G) \ge P_2(G) \ge \cdots, $$
with 
\begin{equation} \label{eq:PkG}
P_k(G) = [P_{k-1}(G),G]P_{k-1}(G)^2
\end{equation} 
for $k \ge 1$: namely, do
we have $G^k = P_k(G)$ for $k \ge 1$ (see \cite[Conj. 1]{PV})? We
denote the order of $G_k$ by $2^{N_k}$. We know for $k \ge 1$ that
(see \cite[Cor. 2.3]{PV})
\begin{equation} \label{eq:N_k} 
N_k \ge 8 \lfloor k/3 \rfloor + 3 \cdot (k\, {\rm mod}\, 3) - 1, 
\end{equation}
and the finite width conjecture would imply that \eqref{eq:N_k} holds
with equality. MAGMA computations confirm this conjecture for all
indices up to $k = 100$. Henceforth, we use the same notation for the
elements $x_0,x_1,x_3$ in $G$ and their images in the quotients $G_k$,
for simplicity. Then $X_k = \Cay(G_k,S)$, and the trivalent graphs
$T_k$ are their $(\Delta-Y)$-transformations.

The graphs $T_k$ can be naturally embedded as tessellations into both
closed hyperbolic surfaces $\Ss_k$ and complete non-compact finite
area hyperbolic surfaces $\Ss_k^\infty$. The edges of the tessellation
are geodesic arcs and the vertices are their end points. The details
of these embeddings are presented in Section \ref{subsec:surfs}. The
following proposition describes the combinatorial properties of the
tessellations $T_k \subset \Ss_k$. The proof is given in Section
\ref{subsec:combprops}.

\begin{prop} \label{prop:combprops} Let $k \ge 2$. Then the generators
  $x_0, x_1, x_3$ of $G_k$ have all the same order $2^{n_k}$ with
  \begin{equation} \label{eq:n_k} 
  n_k = \lfloor \log_2 k \rfloor + 1. 
  \end{equation}
  Let $|G_k| = 2^{N_k}$ (with $N_k$ estimated from below in
  \eqref{eq:N_k}) and $V_k, E_k$ and $F_k$ denote the sets of
  vertices, edges, and faces of the tesselation $T_k \subset \Ss_k$,
  respectively. Then the isometry group of $\Ss_k$ has order $\ge
  2^{N_k}$, we have
  \begin{equation} \label{eq:VkEkFk} 
  |V_k| = 2^{N_k+1}, \quad |E_k| = 3 \cdot 2^{N_k} \quad \text{and} \
  \ |F_k| = 3 \cdot 2^{N_k-n_k}, 
  \end{equation}
  and all faces of $T_k \subset \Ss_k$ are regular hyperbolic
  $2^{n_k+1}$-gons with interior angles $2\pi/3$. Moreover, the genus
  of $\Ss_k$ is given by
  $$ g(\Ss_k) = 1 + 2^{N_k-n_k-1}(2^{n_k}-3). $$
\end{prop}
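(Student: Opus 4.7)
\medskip
\noindent\emph{Proof plan.} The proposition bundles one genuine computation (the order $2^{n_k}$ of each of $x_0,x_1,x_3$ in $G_k$) with a chain of essentially combinatorial and topological consequences, so my plan is to prove \eqref{eq:n_k} first and then deduce the vertex, edge, face, isometry and genus statements from the $(\Delta-Y)$-transformation $X_k \mapsto T_k$ together with Euler's formula.

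For \eqref{eq:n_k} I would work inside the faithful upper triangular Toeplitz representation of $G$ over $\Ff$ set up in Section~\ref{subsec:faithrep}. Writing $x_i = I + M_i$ with $M_i$ strictly upper triangular Toeplitz, the Frobenius identity in characteristic $2$ gives $x_i^{2^n} = I + M_i^{2^n}$. Upper triangular Toeplitz matrices over $\Ff$ form a commutative algebra isomorphic to $\Ff[[S]]$ via the shift $S$; provided $M_i$ has a nonzero first superdiagonal (as forced by the construction), $M_i^{2^n}$ has its lowest nonzero superdiagonal at position exactly $2^n$, with no possibility of cancellation. Since $G^k$ is the kernel of reduction modulo the first $k$ superdiagonals, $x_i^{2^n} \in G^k$ iff $2^n > k$, and the smallest such exponent is $n_k = \lfloor \log_2 k \rfloor + 1$; because $|G_k|$ is a power of $2$, this is the exact order of $x_i$.

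Next I would enumerate the triangles of $X_k = \Cay(G_k,S)$. At each $g \in G_k$ the three cyclic shifts of $x_0 x_1 x_3 = 1$ produce the triangles $\{g, gx_0, gx_0 x_1\}$, $\{g, gx_1, gx_1 x_3\}$, $\{g, gx_3, gx_3 x_0\}$; a direct enumeration of pairs $(s,t) \in S \times S$ with $st \in S$ (which, using the Toeplitz representation and the fact that each generator has order $\geq 4$, reduces to these six cyclic shifts together with their inverses) shows these are all triangles through $g$. Hence each edge of $X_k$ lies in a unique triangle and $X_k$ has exactly $2^{N_k}$ triangles. Applying $(\Delta-Y)$ simultaneously then produces $|V_k| = 2^{N_k} + 2^{N_k} = 2^{N_k+1}$ vertices (old vertices plus one new centre per triangle) and $|E_k| = 3 \cdot 2^{N_k}$ edges (three spokes per triangle replacing the three vanished sides), confirming trivalence.

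Finally, using the surface construction of Section~\ref{subsec:surfs}, I would identify the faces of $T_k \subset \Ss_k$ with the cycles arising from cosets of the cyclic subgroups $\langle x_i \rangle$: each coset closes into a $2^{n_k}$-cycle in $X_k$, which lifts under $(\Delta-Y)$ to a $2^{n_k+1}$-cycle in $T_k$ after inserting a triangle centre on every edge. Summing $|G_k|/|\langle x_i \rangle| = 2^{N_k - n_k}$ over the three generators gives $|F_k| = 3 \cdot 2^{N_k - n_k}$. Since $T_k$ is trivalent, three faces meet at every vertex, forcing interior angles of $2\pi/3$; a regular hyperbolic $2^{n_k+1}$-gon with this vertex angle exists as soon as $n_k \ge 2$ (i.e.\ $k \ge 2$), and the vertex-transitive $G_k$-action realises all faces as congruent copies of it. The same left regular $G_k$-action preserves the triangle set and the $\langle x_i\rangle$-cosets, hence lifts to an isometric action on $\Ss_k$, giving the lower bound $2^{N_k}$ on the order of the isometry group. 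Substituting \eqref{eq:VkEkFk} into $|V_k| - |E_k| + |F_k| = 2 - 2g(\Ss_k)$ produces the stated genus. The main obstacle is the order computation: one must confirm that the leading $S^1$-term in $M_i$ is genuinely nonzero in the explicit representation of Section~\ref{subsec:faithrep}, so that $M_i^{2^{n_k - 1}}$ remains outside $G^k$ and the orders are not accidentally smaller than $2^{n_k}$.
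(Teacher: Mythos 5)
Your overall architecture — establish the orders via the matrix representation, then read off $|V_k|,|E_k|,|F_k|$ from the $(\Delta-Y)$-transformation and the $\langle x_i\rangle$-cosets, and finish with Euler's formula — matches the paper's proof in outline (the paper gets $|E_k|$ and $|F_k|$ from the incidence identities $2|E_k|=3|V_k|=2^{n_k+1}|F_k|$ rather than from your coset count, but both are fine, as is your combinatorial triangle enumeration in place of the paper's geometric identification of triangles with black triangles of $\Ss_k$). The genuine gap is in the order computation, which you yourself identify as the crux but then justify incorrectly. The representation of Section~\ref{subsec:faithrep} is \emph{not} by scalar Toeplitz matrices: the $j$-th upper diagonal of \eqref{eq:matrix-x} is periodic of period $3$ with entries $a_{j1},a_{j2},a_{j3}$ that are $3\times 3$ matrices over $\Ff_2$. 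So these matrices do not form a commutative algebra isomorphic to $\Ff_2[[S]]$, and the ring in which the diagonal blocks multiply, $M(3,\Ff_2)$, has zero divisors and even nonzero nilpotents. Consequently "no possibility of cancellation" is false in general: the $2^{l}$-th superdiagonal of $M_i^{2^{l}}$ is a product of $2^{l}$ consecutive first-superdiagonal blocks of $M_i$, and such a product can vanish even though every factor is nonzero. Your closing remark compounds this by asserting that it suffices to check that the leading term of $M_i$ is nonzero; it does not. What must actually be verified is that $M_i^{2^{n_k-1}}$ has nonzero $2^{n_k-1}$-th superdiagonal, i.e.\ that the relevant $2^{n_k-1}$-fold block products are nonzero for all $k$ simultaneously. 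This is exactly the content of Lemma~\ref{lem:xipow} (the formulas \eqref{eq:xi^2l} with the explicit nonzero matrices $\alpha_i,\beta_i$), which the paper obtains by a calculation based on \cite[Prop.~2.5]{PV}; your Frobenius identity $(1+M_i)^{2^l}=1+M_i^{2^l}$ correctly reduces the problem to this point but does not resolve it.

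The remaining steps are sound. Your triangle enumeration does require, as you note, ruling out length-$3$ relations in $S$ other than the cyclic and inverse variants of $x_0x_1x_3=1$; this is checkable in the representation and is implicit in the paper's geometric picture, where the combinatorial triangles of $X_k$ are precisely the black triangles of the covering $\Ss_k\to\Ss_0$. Once the orders and the counts \eqref{eq:VkEkFk} are in place, your derivations of the face structure, the isometry group bound via the left regular $G_k$-action (the paper phrases this as $\Ss_0=\Ss_k/G_k$), and the genus via $|V_k|-|E_k|+|F_k|=2-2g$ all agree with the paper.
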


Since the hyperbolic surfaces $\Ss_k$ are closed, the spectrum of the
Laplace operator on $\Ss_k$ (with multiplicities) is a non-decreasing
sequence of real numbers $\lambda_k(\Ss_k)$, tending to infinity. In
this enumeration, we have $\lambda_0(\Ss_k) = 0$ and the corresponding
eigenfunction is constant. The following result is a consequence of
our Theorem \ref{thm:main} above and the Brooks-Burger transfer
principle. The details of the proof are given in Section
\ref{subsec:corlambda}. Note that the proof provides an explicit
estimate of the constant $C_1$ in Corollary \ref{cor:lowerlambda1} in
terms of the constant $C$ given in Theorem \ref{thm:main}.

\begin{cor} \label{cor:lowerlambda1} There is a positive constant $C_1
  > 0$ such that we have for the compact hyperbolic surfaces $S_k$ ($k
  \ge 2)$,
  $$ \lambda_1(S_k) \ge C_1. $$
\end{cor}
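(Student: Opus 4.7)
The plan is to apply the Brooks-Burger transfer principle, which relates the spectral gap of a graph embedded in a Riemannian manifold to the first positive Laplace eigenvalue of the manifold itself. The starting point is Theorem \ref{thm:main}(ii): the adjacency spectra of $T_k$ lie in $[-\sqrt{C+3},\sqrt{C+3}]\cup\{\pm 3\}$ with $\sqrt{C+3}<3$, so the combinatorial Laplacian $\Delta_{T_k}=3I-A_{T_k}$ has a uniform positive spectral gap $\lambda_1(\Delta_{T_k})\ge 3-\sqrt{C+3}>0$ for all $k\ge 2$.

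Next, I would verify the geometric regularity of the embedding $T_k\subset \Ss_k$ required by the transfer principle. By Proposition \ref{prop:combprops}, every face is a regular hyperbolic $2^{n_k+1}$-gon with interior angle $2\pi/3$ and all edges of $T_k$ are congruent. A short hyperbolic-trigonometric computation in the right triangle with angles $\pi/2^{n_k+1}$, $\pi/3$, $\pi/2$ shows that the common edge length is uniformly bounded in $k$, converging to $2\,\mathrm{arcosh}(2/\sqrt{3})$. Moreover, since $|\mathrm{Aut}(\Ss_k)|\ge 2^{N_k}=|V_k|/2$, the isometry group acts on $V_k$ with orbits of size at most two, so $V_k\subset \Ss_k$ forms a uniform net with bounded spacing and bounded density (roughly one vertex per area $\pi$).

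Combining these inputs, the Brooks-Burger transfer principle produces positive constants $c_1,c_2>0$, depending only on the graph spectral gap and on the above geometric parameters, such that $\lambda_1(\Ss_k)\ge c_1\,\lambda_1(\Delta_{T_k})\ge c_2$ for every $k\ge 2$. This yields the desired constant $C_1>0$, with an explicit dependence on the constant $C$ from Theorem \ref{thm:main}, as promised in the statement.

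The main obstacle is that the individual faces of the tessellation $T_k\subset \Ss_k$ have unbounded area (growing like $2^{n_k}$), so the common face-centered form of the transfer principle is not directly applicable. The way around this is to work with $V_k$ itself as a discrete net in $\Ss_k$: what is really needed is a uniform upper bound on both the vertex density and the inter-vertex distances, and both are supplied by the verification above. The potentially large faces cause no problem because each face, being a convex polygon in $\Hh^2$, satisfies the standard hyperbolic isoperimetric inequality, which lets the graph Cheeger estimate transfer to a surface Cheeger estimate; a final application of Cheeger's inequality on $\Ss_k$ then delivers the eigenvalue bound.
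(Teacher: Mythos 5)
Your overall strategy --- transfer the uniform spectral gap of $T_k$ from Theorem \ref{thm:main}(ii) to $\lambda_1(\Ss_k)$ via a Brooks--Burger discretization --- is the paper's strategy, and you correctly flag the real obstacle (the faces of $T_k\subset\Ss_k$ have unbounded area). But there is a genuine gap at the point where you invoke the transfer principle as a black box: the hypotheses you verify (bounded edge length, average density one vertex per area $\approx\pi$) are not the ones the principle actually needs, and the one it does need is not checked. The natural cells of the discretization are the equilateral triangles ${\mathcal T}(v)$ of the dual triangulation (the Voronoi cells of $V_k$), whose angles $\pi/2^{n_k}$ tend to $0$; their side lengths and circumradii therefore tend to infinity, so $V_k$ is \emph{not} a uniformly dense net in $\Ss_k$ --- points near the centers of the large faces of $T_k$ are arbitrarily far from every vertex. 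What the Rayleigh-quotient comparison requires is a uniform lower bound on the first Neumann eigenvalue $\nu$ of each cell and of each union of two adjacent cells, uniformly over this degenerating family. That is precisely the paper's Lemma \ref{lem:quad}, proved by a Cheeger-constant computation in hyperbolic trigonometry for the quadrilaterals ${\mathcal Q}(k)$, and it is the main technical content of the proof. Without it (or an equivalent Poincar\'e inequality), nothing in your write-up rules out that the Neumann gaps of the cells degenerate as the triangles become ideal, and the asserted constants $c_1,c_2$ have no justification.

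Your fallback in the last paragraph --- pass from the graph Cheeger constant to a surface Cheeger constant using isoperimetry of the faces, then apply Cheeger's inequality on $\Ss_k$ --- is exactly the alternative route the paper mentions in the remark following Corollary \ref{cor:cheegSk}. But as stated it is only a gesture: making it rigorous requires the Brooks--Makover comparison $c_h\,h(T_k)\le h(\Ss_k^\infty)$ for the non-compact surfaces built from ideal triangles, plus the identification of $\Ss_k$ as the conformal compactification of $\Ss_k^\infty$ under the large-cusp condition (Proposition \ref{prop:confcomp}) in order to transfer the bound from $h(\Ss_k^\infty)$ to $h(\Ss_k)$. None of that machinery is developed in your proposal, so neither of your two routes is actually closed. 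To repair the argument along the paper's lines, replace the ``uniform net'' claim by the averaging map $F(v)=\frac{1}{V}\int_{{\mathcal T}(v)}f$ and supply the uniform Neumann bound of Lemma \ref{lem:quad}.
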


In Section \ref{subsec:confcomp}, we explain that the surfaces $\Ss_k$
are the {\em conformal compactifications} of the non-compact surfaces
$\Ss_k^\infty$. Moreover, the Cheeger constants of the graphs $T_k$
have a uniform lower positive bound since they are an expander
family. These facts imply via the Theorems 3.3 and 4.2 of \cite{BrM}
that the Cheeger constants of both families $\Ss_k$ and $\Ss_k^\infty$
have also uniform lower positive bounds.

\medskip

It is instructive to compare our tessellations $T_k \subset S_k$ to
the well studied tessellations of hyperbolic surfaces by {\em Platonic
  graphs} $\Pi_N$. It turns out that both families agree in one
tessellation (up to duality) but are, otherwise, very different. Let
us first define the graphs $\Pi_N$. Let $N$ be a positive integer
$\ge 2$. The vertices of $\Pi_N$ are equivalence classes
$[\lambda,\mu] = \{ \pm (\lambda,\mu) \}$ with
$$ \{ (\lambda,\mu) \in \Z_N \times \Z_N \mid \gcd(\lambda,\mu,N)=1 \}. $$
Two vertices $[\lambda,\mu]$ and $[\nu,\omega]$ are connected by an edge
if and only if
$$ \det \begin{pmatrix} \lambda & \nu \\ \mu & \omega \end{pmatrix} = 
\lambda \omega - \mu \nu = \pm 1. $$ 

Note that every vertex of $\Pi_N$ has degree $N$. These graphs can
also be viewed as triangulations of finite area surfaces $\Hh^2 /
\Gamma(N)$ by ideal hyperbolic triangles, where $\Hh^2 = \{ z = x + iy
\in \C \mid y > 0 \}$ denotes the upper half plane with hyperbolic
metric $\frac{dx^2+dy^2}{y^2}$ and 
\begin{equation} \label{eq:GammaN} 
  \Gamma(N) = \left\{ \gamma
    = \begin{pmatrix} a & b \\ c & d
  \end{pmatrix} \in {\rm PSL}(2,\Z) \mid \gamma \equiv \pm \begin{pmatrix}
    1 & 0 \\ 0 & 1 \end{pmatrix} \mod N \right\} 
\end{equation}
is the principal congruence subgroup of the modular group
$\Gamma = {\rm PSL}(2,\Z)$. The vertices of this triangulation are the
cusps of $\Ss^\infty(\Pi_N) = \Hh^2 / \Gamma(N)$.  


As mentioned above, $\Pi_8$ is isomorphic to the dual of $T_2$ in
$\Ss_2$. Since the valence of the dual graph $T_k^*$ is a power of
$2$, any isomorphism of $T_k^*$ with a Platonic graph $\Pi_N$ would
imply $N = 2^{n_k+1}$ with $n_k$ given in \eqref{eq:n_k}. However,
this leads to a contradiction {\em for all} $k \ge 3$. The next
proposition summarizes these results. The proof is given in Section
\ref{sec:isomT2Pi8}.

\begin{prop} \label{prop:isomt2s8thm:main} The Platonic graph $\Pi_8$
  is isomorphic to the dual of $T_2$ in the {\em unique} compact genus
  5 hyperbolic surface $\Ss_2$ with maximal automorphism group of
  order 192. For $k \ge 3$, there is no graph isomorphism between
  $T_k^*$ and $\Pi_N$, for any $N \ge 2$.
\end{prop}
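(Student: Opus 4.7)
The plan for the first assertion is to identify both $\Ss_2$ and the conformal compactification $\overline{\Hh^2/\Gamma(8)}$ as the (unique) compact hyperbolic surface of genus $5$ whose full isometry group has order $192$. Specialising Proposition \ref{prop:combprops} to $k=2$ (so $n_2 = 2$ and $N_2 = 5$) gives $|V_2| = 64$, $|E_2| = 96$ and $|F_2| = 24$ regular octagonal faces on $\Ss_2$, so $T_2^*$ is $8$-regular on $24$ vertices with triangular faces. A parallel count for $\Pi_8$ yields $24$ vertices of valence $8$, and the ideal triangulation of $\Hh^2/\Gamma(8)$ with $64$ triangles has Euler characteristic $24 - 96 + 64 = -8$, so its conformal compactification has genus $5$. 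Next I would verify that $|{\rm Iso}(\Ss_2)| \ge 192$ by combining the $|G_2| = 32$ left translations with a further factor of $6$ coming from cyclic permutations of the generators $\{x_0, x_1, x_3\}$ (which preserve the relation $x_0 x_1 x_3 = 1$) and an orientation reversal, and observe that $|{\rm Iso}(\overline{\Hh^2/\Gamma(8)})| \ge |{\rm PSL}(2,\Z/8)| = 192$. Invoking the classical classification of genus-$5$ surfaces with largest automorphism group then identifies the two surfaces, and the dual tessellation of $T_2$ is carried to the Platonic one $\Pi_8$.

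For the second assertion the argument is a clean vertex-count obstruction. Suppose $T_k^* \cong \Pi_N$ for some $k \ge 3$. Matching valences forces $N = 2^{n_k+1}$, since by Proposition \ref{prop:combprops} every vertex of $T_k^*$ has valence $2^{n_k+1}$ (each face of $T_k$ being a $2^{n_k+1}$-gon) and every vertex of $\Pi_N$ has valence $N$. A direct count of equivalence classes $[\lambda, \mu]$ in $\Z_N \times \Z_N$ with $\gcd(\lambda, \mu, N) = 1$ gives
\[
|V(\Pi_{2^{n_k+1}})| = \frac{2^{2(n_k+1)} - 2^{2 n_k}}{2} = 3 \cdot 2^{2 n_k - 1},
\]
while \eqref{eq:VkEkFk} yields $|V(T_k^*)| = |F_k| = 3 \cdot 2^{N_k - n_k}$. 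Equating these forces $N_k = 3 n_k - 1$, which contradicts the bound $N_k \ge 8 \lfloor k/3 \rfloor + 3(k \bmod 3) - 1 > 3 \lfloor \log_2 k \rfloor + 2 = 3 n_k - 1$ coming from \eqref{eq:N_k} and $n_k = \lfloor \log_2 k \rfloor + 1$; this strict inequality is checked directly for $3 \le k \le 7$ and is trivial thereafter by linear-versus-logarithmic growth.

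The main obstacle is the last step of part one: uniqueness of the ambient surface does not on its own force the two $8$-regular, $\Aut$-invariant triangulations to coincide. I would resolve this by exhibiting both $T_2^*$ and $\Pi_8$ as the orbit of a single vertex under a common index-$8$ subgroup of the $192$-element automorphism group (the face-stabiliser on the one side, the cusp-stabiliser on the other), so that the surface isomorphism transports one tessellation onto the other and yields the required graph isomorphism.
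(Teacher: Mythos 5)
Your argument for the second assertion is correct and is essentially the paper's own: valence matching forces $N = 2^{n_k+1}$, the two vertex counts give $N_k = 3n_k-1$, and this contradicts the lower bound \eqref{eq:N_k} for every $k \ge 3$. No issues there.

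The first assertion is where the substance lies, and your plan has two genuine gaps. First, the bound $|{\rm Iso}(\Ss_2)| \ge 192$ is not established. The $32$ deck transformations from $G_2$ are fine, but the extra factor of $6$ is asserted rather than proved: the cyclic permutation $x_0 \mapsto x_1 \mapsto x_3 \mapsto x_0$ descends to an isometry of $\Ss_2$ only if it induces an automorphism of $G_2$ (equivalently, only if the corresponding rotation of the orbifold $\Ss_0$ preserves $\ker \Psi$), and preserving the single relation $x_0x_1x_3=1$ does not give this for free --- $G$ has the further relations $r_2,r_3$ of \eqref{eq:r1r2r3}, so some verification (in practice, computational) is needed. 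Worse, the uniqueness theorem you want to invoke (Battsengel) concerns the maximal group of \emph{orientation-preserving} automorphisms in genus $5$, which is ${\rm PSL}(2,\Z_8)$ of order $192$; your factor of $2$ from an orientation reversal supplies the wrong kind of symmetry and leaves you with at most $96$ orientation-preserving isometries. The missing involution is the orientation-preserving swap of the black and white triangles of $\Ss_0$, whose liftability again requires checking.

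Second, you rightly flag that an isometry of the ambient surfaces does not transport one tessellation to the other, but your proposed repair --- realising both vertex sets as orbits under a common subgroup (and note the stabiliser of a face of $T_2$, respectively of a cusp, has order $8$ and index $24$, not index $8$) --- only identifies the vertex sets as homogeneous spaces; a vertex-transitive graph is not determined by its vertex orbit, so this does not yield the required graph isomorphism. The paper resolves both difficulties simultaneously by an explicit group computation: ${\rm PSL}(2,\Z_8)$, presented as a quotient of the triangle group $\Delta^+(2,3,8)$, has a unique normal subgroup $N$ of index $6$ generated by $X=x^{-1}z^2x$, $Y=y^{-1}z^2y$, $Z=z^2$, and MAGMA exhibits an isomorphism $N \cong G_2$ sending $X,Y,Z$ to $x_0,x_1,x_3$. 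Since $\Ss(\Pi_8)/N$ is the two-triangle orbifold $\Ss_0$, both $\Ss(\Pi_8)$ and $\Ss_2$ are the covering of $\Ss_0$ determined by the same surjection onto the same group with matched generators; hence the surfaces are isometric and the two triangulations, both pulled back from $\Ss_0$, correspond, which is exactly the asserted duality. Some generator-level identification of this kind is the essential ingredient your outline is missing.
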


It is easily checked via Euler's polyhedral formula that any
triangulation $X$ of a compact oriented surface $\Ss$ satisfies
$$ |E(X)| = 3(|V(X)|-2) + 6g(\Ss), $$
i.e., the number of edges $|E(X)|$ of every triangulation $X$ with at
least two vertices is $\ge 6g(\Ss)$.  Therefore, the ratio
$$ \frac{6g(\Ss)}{|E(X)|} \le 1 $$ 
measures the non-flatness of such a triangulation, i.e., how
efficiently the edges of $X$ are chosen to generate a surface of high
genus. Note that, for every $k \ge 2$, the dual graph $T_k^*$ can be
viewed as a triangulation of $\Ss_k$ and that the number of edges of
$T_k$ and $T_k^*$ coincide. Then we have the following asymptotic
result, proved in Section \ref{subsec:combprops}.

\begin{prop} \label{prop:asymptnonflat}
  We have
  \begin{equation} \label{eq:asympTk}
  \lim_{k \to \infty} \frac{6g(\Ss_k)}{|E(T_k^*)|} = 1, 
  \end{equation}
  where $E(T_k^*)$ denotes the set of edges of $T_k^*$.
\end{prop}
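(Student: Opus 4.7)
The plan is to prove Proposition~\ref{prop:asymptnonflat} by direct substitution of the explicit formulas already established in Proposition~\ref{prop:combprops}, together with the observation that both $N_k$ and $n_k$ tend to infinity as $k \to \infty$. Since no new geometric input is needed, this is essentially a one-line computation once the bookkeeping is in place.

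First I would note that the dual graph $T_k^*$ has the same number of edges as $T_k$ itself, so $|E(T_k^*)| = |E_k| = 3 \cdot 2^{N_k}$ by \eqref{eq:VkEkFk}. Substituting the genus formula $g(\Ss_k) = 1 + 2^{N_k - n_k - 1}(2^{n_k} - 3)$ from Proposition~\ref{prop:combprops} then yields
$$
\frac{6 g(\Ss_k)}{|E(T_k^*)|}
= \frac{6 + 6 \cdot 2^{N_k - n_k - 1}(2^{n_k}-3)}{3 \cdot 2^{N_k}}
= \frac{2}{2^{N_k}} + 1 - \frac{3}{2^{n_k}}.
$$

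Next I would verify that both error terms vanish in the limit. The bound \eqref{eq:N_k} gives $N_k \to \infty$ as $k \to \infty$, so $2/2^{N_k} \to 0$. Similarly, $n_k = \lfloor \log_2 k \rfloor + 1 \to \infty$, so $3/2^{n_k} \to 0$. Passing to the limit in the displayed equation yields \eqref{eq:asympTk}.

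There is no real obstacle here: the only conceptual step is identifying $|E(T_k^*)|$ with $|E(T_k)|$, which follows from the definition of the dual graph. Everything else reduces to elementary algebra and the elementary fact that both $N_k$ and $n_k$ grow without bound.
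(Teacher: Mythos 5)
Your proof is correct and is essentially the same computation as the paper's: the paper writes the ratio as $\frac{4}{|V_k|} + (1-\mu_k)$ with $\mu_k = 3/2^{n_k}$, which is literally your expression $\frac{2}{2^{N_k}} + 1 - \frac{3}{2^{n_k}}$ in different notation, and both arguments conclude by letting $N_k, n_k \to \infty$. Nothing to add.
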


The other above-mentioned family $\widehat \Ss_k$ of surfaces associated to the graphs
$T_k$ can be viewed as tubes around $T_k$ with specific hyperbolic metrics.
The surfaces $\widehat \Ss_k$ form an infinite family of coverings. It
follows from \cite{Bu1} that their smallest positive Laplace eigenvalue
has also a uniform positive lower bound.

\begin{cor} \label{cor:lowerlambda2}
  The compact hyperbolic surfaces $\widehat \Ss_k$ ($k \ge 2$) have
  genus $1+|V_k|/2$ and isometry groups of order $\ge
  |V_k|/2$. They form a tower of coverings
  $$ \cdots \longrightarrow \widehat \Ss_{k+1} \longrightarrow \widehat 
  \Ss_k \longrightarrow \widehat \Ss_{k-1} \longrightarrow
  \cdots, $$ 
  where all the covering indices are powers of $2$. There is a positive
  constant $C_2 > 0$ such that we have, for all $k$,
  $$ \lambda_1(\widehat \Ss_k) \ge C_2. $$
\end{cor}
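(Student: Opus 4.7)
The plan is to verify the four assertions of the corollary---genus formula, isometry group order, tower of coverings, and uniform spectral gap---essentially independently, since only the last one rests on a nontrivial external result.

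For the genus, I would compute the Euler characteristic additively. Each of the $|V_k|$ $Y$-pieces is a sphere with three open disks removed, hence has Euler characteristic $-1$, and gluing pairs of boundary circles contributes $0$ at every interface (since $\chi(S^1)=0$). This yields $\chi(\widehat\Ss_k) = -|V_k|$, and $\chi = 2 - 2g$ gives $g(\widehat\Ss_k) = 1 + |V_k|/2$. For the isometry group, $G_k$ acts simply transitively on $X_k = \Cay(G_k, S)$ by left multiplication. Because the $(\Delta-Y)$-transformation is defined by a purely local combinatorial rule and the set of triangles of $X_k$ is $G_k$-invariant, this action extends to graph automorphisms of $T_k$. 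Fixing one regular hyperbolic $Y$-piece as a model and performing the gluings $G_k$-equivariantly, the action lifts to isometries of $\widehat\Ss_k$. Since $|G_k| = 2^{N_k} = |V_k|/2$ by Proposition \ref{prop:combprops}, this supplies the claimed lower bound on $|\Aut(\widehat\Ss_k)|$.

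For the tower of coverings, the nesting $G^k \le G^{k-1}$ is immediate from the definition of $G^k$ (vanishing of the first $k$ upper diagonals implies vanishing of the first $k-1$). The resulting surjections $G_k \twoheadrightarrow G_{k-1}$ induce graph coverings $X_k \to X_{k-1}$, which descend to coverings $T_k \to T_{k-1}$ by the locality of the $(\Delta-Y)$-transformation, and lift further to Riemannian coverings $\widehat\Ss_k \to \widehat\Ss_{k-1}$ because all $Y$-pieces are mutually isometric and the gluing patterns are compatible under the graph map. The covering index equals $|G^{k-1}/G^k|$, which divides $|G_k| = 2^{N_k}$ and is therefore a power of $2$.

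For the spectral bound, Theorem \ref{thm:main}(ii) states that the $T_k$ form an expander family, so the discrete Cheeger inequality gives a uniform positive lower bound $h_0$ on the graph Cheeger constants $h(T_k)$. Buser's comparison theorems in \cite{Bu1} translate this into a uniform positive lower bound $\widehat h$ on the Cheeger constants of the surfaces $\widehat\Ss_k$ assembled from fixed regular $Y$-pieces along the gluing scheme encoded by $T_k$, and the continuous Cheeger inequality $\lambda_1 \ge \widehat h^{\,2}/4$ then yields the claimed $C_2 > 0$. The main obstacle is precisely this last step: one must carefully identify the hypotheses of Buser's transfer principle with the present setup (regular $Y$-pieces, trivalent combinatorial gluing, uniform geometric control at the collars) so that the resulting constant depends only on the fixed $Y$-piece geometry and on $h_0$, and not on $k$. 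The remaining assertions reduce to Euler-characteristic bookkeeping and routine functoriality of the construction $G \mapsto \Cay(G,S) \mapsto T \mapsto \widehat\Ss$.
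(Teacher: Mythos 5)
Your proposal is correct and follows essentially the same route as the paper: Euler-characteristic bookkeeping for the genus, the $G_k$-action (respectively graph automorphisms) for the isometries, the surjections $G_k \twoheadrightarrow G_{k-1}$ for the tower, and the graph expander property plus Buser's transfer for the spectral gap. The only cosmetic difference is in the last step: the paper invokes Buser's inequality $\lambda_1(\widehat \Ss_k) \ge \frac{1}{144\pi^2}\, h(T_k)$ from \cite[(4.1)]{Bu1}, which directly compares the surface eigenvalue with the \emph{graph} Cheeger constant, whereas you factor this through the surface Cheeger constant $h(\widehat\Ss_k)$ and the continuous Cheeger inequality --- the same content packaged in two steps instead of one.
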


Corollary \ref{cor:lowerlambda2} is proved in Section
\ref{subsec:corlambda2}. There is a well-known classical result by
Randol \cite{Ran} which is, in some sense, complementary to this
corollary, namely, there exist finite coverings $\widetilde \Ss$ of
every closed hyperbolic surface $\Ss$ with arbitrarily small first
positive Laplace eigenvalue.

\medskip

\noindent {\bf Acknowledgement:} The authors are grateful to Peter
Buser, Shai Evra, and Hugo Parlier for several useful discussions and
information about the articles \cite{Breu}, \cite{BrM}, \cite{Bur},
and \cite{Bu1}. They also acknowledge the support of the EPSRC Grant
EP/K016687/1 ``Topology, Geometry and Laplacians of Simplicial
Complexes". NP and AV are grateful for the kind hospitality of the
Max-Planck-Institut f\"ur Mathematik in Bonn.

\section{Combinatorial properties of the tessellations 
$T_k \subset \Ss_k$}
\label{sec:combpropssurf}

Let $\widetilde G$ be the group defined in \eqref{eq:G}. It was shown
in \cite[Section 2]{PV} that the subgroup $G$, generated by $x_0,x_1$,
is an index two subgroup of $\widetilde G$. (Note that our group
$\widetilde G$ is denoted in \cite{PV} by $\Gamma$, which is reserved
for ${\rm PSL}(2,\Z)$ in this paper.) $G$ is explicitly given by $G =
\langle x_0,x_1 \mid r_1,r_2,r_3 \rangle$ with
\begin{eqnarray} \label{eq:r1r2r3}
r_1(x_0,x_1) &=& (x_1x_0)^3x_1^{-3}x_0^{-3}, \nonumber \\
r_2(x_0,x_1) &=& x_1 x_0^{-1} x_1^{-1} x_0^{-3} x_1^2 x_0^{-1} x_1 x_0 x_1, \\
r_3(x_0,x_1) &=& x_1^3 x_0^{-1} x_1 x_0 x_1 x_0^2 x_1^2 x_0 x_1 x_0. \nonumber
\end{eqnarray}

\subsection{A faithful matrix representation of $G$}
\label{subsec:faithrep}

Let us first recall the faithful representation of $G$ by infinite
upper triangular Toeplitz matrices, given in \cite{PV} and based on
representations introduced in \cite{CMSZ}. In fact, every $x \in G$
has a representation of the form
\begin{equation} \label{eq:matrix-x}
x = \begin{pmatrix} 
1 & a_{11} & a_{21} & \dots & a_{k1} & 0 & 0 & $\dots$ & $\dots$ \\ 
0 & 1 & a_{12} & a_{22} & \dots & a_{k2} & 0 & \ddots & \\
0 & 0 & 1 & a_{13} & a_{23} & \dots & a_{k3} & 0 & \ddots \\
\vdots & \ddots & 0 & 1 & a_{11} & a_{21} & \dots & a_{k1} & \ddots \\
\vdots & & \ddots & \ddots & 1 & a_{12} & a_{22} & \dots & \ddots \\
\vdots & & & \ddots & \ddots & 1 & a_{13} & a_{23} & \ddots \\
\vdots & & & & \ddots & \ddots & \ddots & \ddots & \ddots
\end{pmatrix}, 
\end{equation}
where each element $a_{ij}$ is in the set $M(3,\Ff_2)$ of $(3 \times
3)$-matries with entries in $\Ff_2$, and $0$ and $1$ stand for the
zero and identity matrix in $M(3,\Ff_2)$. Note the periodic pattern in
the upper diagonals of the matrix, i.e., the $j$-th upper diagonal is
uniquely determined by the first three entries $a_j =
(a_{j1},a_{j2},a_{j3})$, which can be understood as a $(3 \times
9)$-matrix with values in $\Ff_2$. We use the short-hand notation
$M_0(a_1,a_2,\dots,a_k)$ for the matrix in \eqref{eq:matrix-x}. If the
first $l$ upper diagonals in \eqref{eq:matrix-x} vanish, we also write
$M_l(a_{l+1},\dots,a_k)$. Let $G^k$ be the subgroup of all elements $x
\in G$ with vanishing first $k$ upper diagonals in their matrix
representation. It follows from the structure of these matrices that
$G^k$ is normal and that the quotient group $G_k = G/G^k$ is a
$2$-group, i.e., nilpotent.

Recall that we use the same notation for the generators $x_0,x_1,x_3$
of $G$ and their images in the quotient $G_k$. We will see later that
the faces of the tessellation $T_k \subset \Ss_k$ are determined by
the orders of these generators in $G_k$.  We will now determine these
orders. Let {\small
\begin{align*}
& \alpha_0 = \begin{pmatrix} 0 & 0 & 0 & 0 & 0 & 0 & 0 & 0 & 0\\ 
                   0 & 0 & 1 & 0 & 0 & 1 & 0 & 0 & 1\\ 
                   0 & 1 & 1 & 0 & 1 & 1 & 0 & 1 & 1 \end{pmatrix},
& \beta_0 = \begin{pmatrix} 0 & 0 & 0 & 0 & 0 & 0 & 0 & 0 & 0\\ 
                   0 & 1 & 1 & 0 & 1 & 1 & 0 & 1 & 1\\ 
                   0 & 1 & 0 & 0 & 1 & 0 & 0 & 1 & 0 \end{pmatrix},\\[.2cm]
& \alpha_1 = \begin{pmatrix} 0 & 0 & 0 & 0 & 1 & 1 & 0 & 1 & 0\\ 
                   0 & 1 & 0 & 1 & 0 & 0 & 0 & 0 & 1\\ 
                   1 & 1 & 1 & 0 & 0 & 0 & 0 & 1 & 0 \end{pmatrix},
& \beta_1 = \begin{pmatrix} 0 & 0 & 0 & 0 & 1 & 1 & 0 & 1 & 0\\ 
                   0 & 1 & 0 & 1 & 0 & 0 & 0 & 0 & 1\\ 
                   1 & 1 & 1 & 0 & 0 & 0 & 0 & 1 & 0 \end{pmatrix},\\[.2cm]
& \alpha_3 = \begin{pmatrix} 0 & 0 & 0 & 0 & 1 & 1 & 0 & 1 & 0\\ 
                   0 & 1 & 1 & 1 & 0 & 1 & 0 & 0 & 0\\ 
                   1 & 0 & 0 & 0 & 1 & 1 & 0 & 0 & 1 \end{pmatrix},
& \beta_3 = \begin{pmatrix} 0 & 0 & 0 & 0 & 0 & 1 & 0 & 1 & 1\\ 
                   1 & 1 & 0 & 0 & 1 & 1 & 0 & 0 & 0\\ 
                   0 & 1 & 1 & 0 & 0 & 1 & 1 & 0 & 0 \end{pmatrix}.
\end{align*}
} 
Then we have $x_i = M_0(\alpha_i,\dots)$ for $i=0,1,3$, and we obtain
the following fact about the leading diagonal of their $2$-powers.

\begin{lem} \label{lem:xipow} We have for $i \in \{0,1,3\}$ and $l \ge
  0$:
  \begin{equation} \label{eq:xi^2l}
  x_i^{2^l} =
  \begin{cases} M_{2^l-1}(\alpha_i,\dots), & \text{if $l$ is even},\\
                M_{2^l-1}(\beta_i,\dots), & \text{if $l$ is odd}.
  \end{cases}
  \end{equation}
  This implies, in particular, for $k \in \N$ that the order of $x_i$
  in $G_k$ is $2^{n_k}$ with $n_k$ given in \eqref{eq:n_k}.
\end{lem}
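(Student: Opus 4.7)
My plan is to combine a characteristic-$2$ Frobenius identity with a direct inspection of how squaring behaves on the leading upper block-diagonal of a Toeplitz matrix, and finish with a short explicit matrix calculation. The infinite upper-triangular matrices in \eqref{eq:matrix-x}, with entries in $M(3,\Ff_2)$, form a (non-commutative) ring $R$ of characteristic $2$. Since every element commutes with itself, $(I+A)^2 = I + 2A + A^2 = I + A^2$ in $R$, and iterating gives $(I+A)^{2^l} = I + A^{2^l}$ for every $l \ge 0$. Setting $A_i := x_i - I$, this yields
\[
  x_i^{2^l} \;=\; I + A_i^{2^l} \qquad (l \ge 0),
\]
so \eqref{eq:xi^2l} reduces to an assertion about the first non-vanishing upper block-diagonal of $A_i^{2^l}$.

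I would then argue by induction on $l$. The key structural observation is that if $B \in R$ has its first $m-1$ upper block-diagonals vanishing, then $B^2$ has its first $2m-1$ upper block-diagonals vanishing, and its $2m$-th block-diagonal is determined solely by the $m$-th block-diagonal of $B$: indeed $(B^2)_{i,i+2m} = \sum_k B_{i,k}B_{k,i+2m}$, and the only index $k$ for which both factors can be non-zero is $k = i+m$. Applying this with $B = A_i^{2^l}$ and $m = 2^l$ propagates the induction and shows that the leading block-diagonal of $A_i^{2^{l+1}}$ is obtained from that of $A_i^{2^l}$ by an explicit squaring map on the period-$3$ triple describing that diagonal. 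The alternation claimed in \eqref{eq:xi^2l} is then the statement that this squaring map exchanges $\alpha_i$ and $\beta_i$ for each $i \in \{0,1,3\}$, which I would verify by a single finite computation per generator: multiply out $x_i^2$ from the definitions, read off that its leading non-vanishing block-diagonal is $\beta_i$, and check (equivalently, by one more squaring) that the corresponding operation sends $\beta_i$ back to $\alpha_i$.

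The claim about orders in $G_k$ then follows immediately. Since $G_k$ is a $2$-group, $\ord_{G_k}(x_i)$ is a power of $2$. By the definition of $G^k$, the element $x_i^{2^l}$ lies in $G^k$ iff the first $k$ upper block-diagonals of $x_i^{2^l}$ vanish, and by \eqref{eq:xi^2l} together with the manifest non-vanishing of the explicit triples $\alpha_i$ and $\beta_i$, this happens precisely when $2^l - 1 \ge k$. The smallest such $l$ is $\lceil \log_2(k+1) \rceil = \lfloor \log_2 k \rfloor + 1 = n_k$, so $x_i$ has order $2^{n_k}$ in $G_k$. The hard part here is genuinely only the finite matrix calculation confirming the $\alpha_i \leftrightarrow \beta_i$ swap under squaring; every other step is a structural consequence of the characteristic-$2$ Frobenius identity and the Toeplitz shape of the ring $R$.
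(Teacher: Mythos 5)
Your proof is correct and is, in essence, the ``straightforward calculation'' that the paper outsources to Prop.~2.5 of \cite{PV}: both arguments reduce \eqref{eq:xi^2l} to tracking the leading upper block-diagonal under repeated squaring, and your characteristic-$2$ Frobenius identity $(I+A)^{2^l}=I+A^{2^l}$ combined with the band-structure observation $(B^2)_{i,i+2m}=B_{i,i+m}\,B_{i+m,i+2m}$ is a clean, self-contained way to organize that computation, and your deduction of $\ord_{G_k}(x_i)=2^{n_k}$ from it is also fine. One point of precision: because of the period-$3$ pattern, the induced map on triples is $d_j=c_j\,c_{j+(2^l\bmod 3)}$ (indices cyclic), so the two transitions $\alpha_i\mapsto\beta_i$ and $\beta_i\mapsto\alpha_i$ are governed by \emph{two different} shifts, namely $2^l\equiv 1$ or $2 \pmod 3$ according to the parity of $l$; your proposed verification (compute $x_i^2$, then square once more to get $x_i^4$) does test exactly these two maps, but the singular phrase ``this squaring map'' should not suggest that a single map is being iterated.
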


\begin{proof} Since $G_k$ is a $2$-group, $\ord_{G_k}(x_i)$ has to be
  a power of $2$. The formulas \eqref{eq:xi^2l} follow via a
  straightforward calculation using Prop. 2.5 in \cite{PV}. This
  implies that $\ord_{G_k}(x_i) = 2^l$ if and only if $2^{l-1} \le k <
  2^l$, i.e., $l = \lfloor \log_2 k \rfloor +1 = n_k$.
\end{proof}

\subsection{The surfaces $\Ss_k$ and $\Ss_k^\infty$ via covering
  theory}
\label{subsec:surfs}

Let $X_k$ be the Cayley graph $\Cay(G_k,S)$. 
We will now explain how to construct the closed hyperbolic surfaces
$\Ss_k$: We start with an orbifold $\Ss_0$ by gluing together two {\em
  compact} hyperbolic triangles ${\mathcal T}_1, {\mathcal T}_2
\subset \Hh^2$ with angles $\pi/\ord_{G_k}(x_0), \pi/\ord_{G_k}(x_1)$
and $\pi/\ord_{G_k}(x_3)$ along their corresponding sides. Both
triangles are equilateral since, by Lemma \ref{lem:xipow}, we have
$\ord_{G_k}(x_0)= \ord_{G_k}(x_1)=\ord_{G_k}(x_3) = 2^{n_k}$. It is
useful to think of the two triangles ${\mathcal T}_1$ and ${\mathcal
  T}_2$ in $\Ss_0$ to be coloured black and white, respectively. Let
$P_0, P_1, P_2 \in \Ss_0$ be the singular points (i.e., the identified
vertices of the triangles ${\mathcal T}_1$ and ${\mathcal T}_2$ in
$\Ss_0$) and $Q \in \Ss_0$ be the center of the white triangle
${\mathcal T}_1$. Note that $\Ss_0 \backslash \{ P_0,P_1,P_2 \}$
carries a hyperbolic metric induced by the triangles ${\mathcal T}_1,
{\mathcal T}_2$.  Choose a geometric basis
$\gamma_0,\gamma_1,\gamma_2$ of the fundamental group
$\pi_1(\Ss_0\backslash\{P_0,P_1,P_2\},Q)$ such that $\gamma_i$ is a
simple loop (starting and ending at $Q$) around the singular point
$P_i \in \Ss_0$ and $\gamma_0 \gamma_1 \gamma_2 = e$. Note that
$\pi_1(\Ss_0\backslash\{P_0,P_1,P_2\},Q)$ is a free group in the
generators $\gamma_0,\gamma_1$. The surjective homomorphism
$$ \Psi: \pi_1(\Ss_0\backslash\{P_0,P_1,P_2\},Q) \to G_k, $$
given by $\Psi(\gamma_0) = x_0$, $\Psi(\gamma_1) = x_1$ and
$\Psi(\gamma_2) = x_3$, induces a branched covering $\pi: \Ss_k \to
\Ss_0$, by Riemann's existence theorem (see \cite[Thms. 4.27 and
4.32]{Vo} or \cite[(17)]{BCG}) with all ramification indices equals
$2^{n_k}$ and, therefore, the closed surface $\Ss_k$ carries a
hyperbolic metric such that the restriction
$$ \pi: \Ss_k \backslash \pi^{-1}(\{P_0,P_1,P_2\}) \to \Ss_0 \backslash
\{P_0,P_1,P_2\} $$ is a Riemannian covering. The surface $\Ss_k$ is a
Belyi surface since it is a branched covering over $\Ss_0$ ramified at
the three points $P_0, P_1, P_2$. Moreover, $\Ss_k$ is tessellated by
$2 |G_k| = 2^{N_k+1}$ equilateral hyperbolic triangles, half of them
black and the others white. Hurwitz's formula yields
\begin{equation} \label{eq:gSinf}
g(\Ss_k) = 1 + \frac{1-\mu_k}{2} |G_k|, 
\end{equation}
where
\begin{equation} \label{eq:mu}
  \mu_k = \frac{1}{\ord(x_0)} + \frac{1}{\ord(x_1)} + \frac{1}{\ord(x_3)}
  = \frac{3}{2^{n_k}}.
\end{equation}
Recall that the orders $2^{n_k}$ of $x_i$ ($i = 0,1,3$) were
given in Lemma \ref{lem:xipow}. In the case $k=2$ we have $|G_2|
= 32$ and $\ord(x_0)=\ord(x_1)=\ord(x_3)=4$, which leads to
$$ g(\Ss_2) = 1 + \frac{1}{8} \cdot 32 = 5. $$

$G_k$ acts simply transitive on the black triangles of $\Ss_k$. Let $V
= \pi^{-1}(Q)$ and $V_{\rm white}, V_{\rm black} \subset V$ be the
sets of centers of white and black triangles, respectively. Choose a
reference point $Q_0 \in V_{\rm white}$, and identify the vertices of
the Cayley graph $X_k=\Cay(G_k,S)$ with the points in $V_{\rm white}$
by $G_k \ni h \mapsto hQ_0 \in V_{\rm white}$. Then two adjacent
vertices in $X_k$ are the centers of two white triangles which share a
black triangle as their common neighbour. The corresponding edge in
$X_k$ can then be represented by the minimal geodesic passing through
these three triangles and connecting these two vertices. Moreover,
$G_k$ acts on the surface $\Ss_k$ by isometries and we have $\Ss_0 =
\Ss_k/G_k$, i.e., the isometry group of $\Ss_k$ has order $\ge |G_k| =
2^{N_k}$.

We could also start the process by gluing together two {\em ideal}
hyperbolic triangles ${\mathcal T}_1^\infty$ and ${\mathcal
  T}_2^\infty$, coloured black and white, along their corresponding
edges. Each edge of ${\mathcal T}_i^\infty$ ($i=1,2$) has a unique
intersection point {\em (tick mark)} with the incircle of the
triangle, and these tick-marks of corresponding edges of ${\mathcal
  T}_1^\infty$ and ${\mathcal T}_2^\infty$ are identified under the
gluing. The resulting surface $\Ss_0^\infty$ is topologically a
$3$-punctured sphere, carrying a complete hyperbolic metric of finite
volume, and the same arguments as above then lead to an embedding of
the graphs $X_k$ into complete non-compact finite area hyperbolic
surfaces $\Ss_k^\infty$, triangulated by ideal black and white
triangles, where the vertices of $X_k$ correspond to the centers of
the white ideal triangles in $\Ss_k^\infty$, and we have $\Ss_0^\infty
= \Ss_k^\infty / G_k$.

\begin{figure}[h]
  \psfrag{x0}{\tiny $\red{x_0}$}
  \psfrag{x1}{\tiny $\red{x_1}$}
  \psfrag{x3}{\tiny $\red{x_3}$}
  \begin{center}      
     \includegraphics[width=0.495\textwidth]{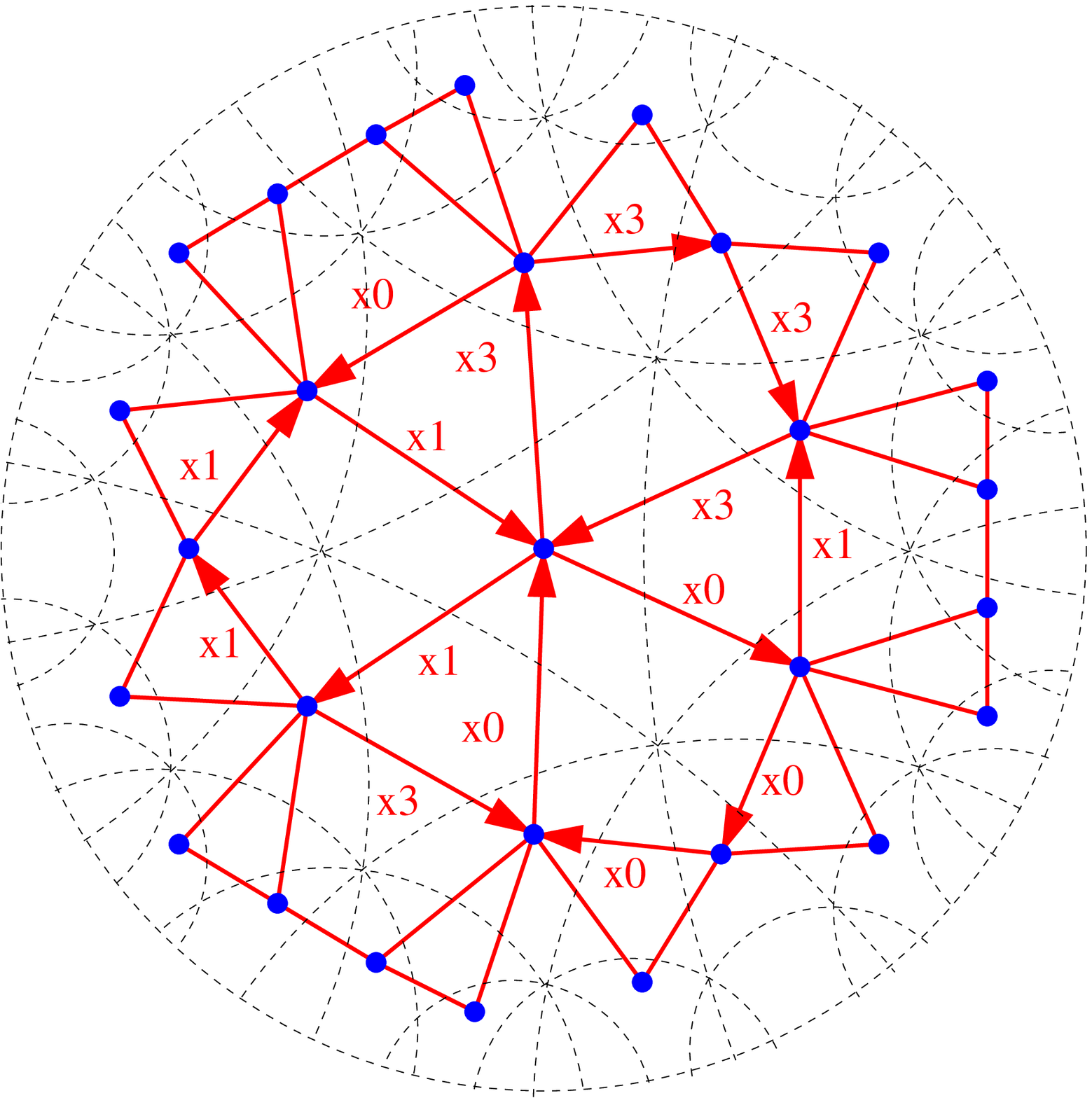}
     \includegraphics[width=0.495\textwidth]{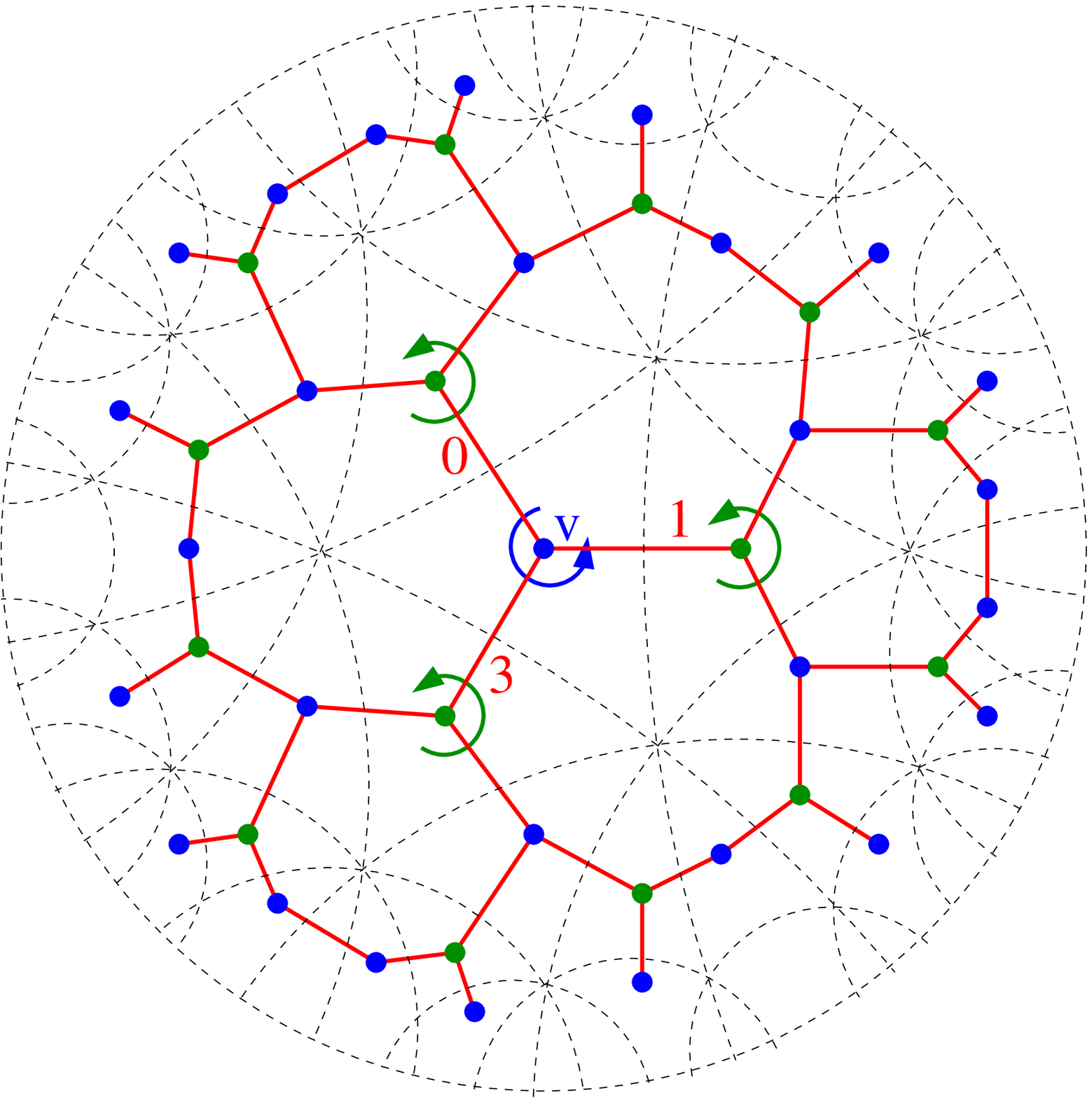}
  \end{center}
  \caption{The lifts of the Cayley graph $X_2$ (left) and of the
    $(\Delta-Y)$-transformation $T_2$ (right) to the Poincar{\'e} unit
    disc $\D^2$}
   \label{fig:DYtrafo}
\end{figure}

\subsection{From the Cayley graphs $X_k$ to the trivalent graphs
  $T_k$}
\label{subsec:XkTK}

For the transition from $X_k$ to $T_k$ we use the
$(\Delta-Y)$-transformation.  In this transformation, we add a new
vertex $v$ for every combinatorial triangle of the original graph,
remove the three edges of this triangle and replace them by three
edges connecting $v$ with the vertices of this triangle. We apply this
rule to our graph $X_k$ and obtain a graph $T_k$, which we can view
again as an embedding in $\Ss_k$ with the following properties: The
vertex set of $T_k$ coincides with $V$, and there is an edge (minimal
geodesic segment) connecting every black/white vertex in $V$ with the
vertices in the three neighbouring white/black triangles. The best way
to illustrate this transformation is to present it in the universal
covering of the surface $\Ss_k$, i.e., the Poincar{\'e} unit disc
$\D^2$ (see Figure \ref{fig:DYtrafo}, the new vertices replacing every
triangle are green). Note that $T_k$ has twice as many vertices as
$X_k$, which shows that the isometry group of the above compact
surface $\Ss_k$ has order $\ge |G_k| = |V_k|/2$, where $V_k$ denotes
the vertex set of $T_K$. Moreover, $T_k$ is the dual of the
triangulation of $\Ss_k$ by the above-mentioned compact black and
white triangles.

\subsection{A direct construction of $\Ss_k$ and $\Ss_k^\infty$ from
  $T_k$} \label{subsec:dirconst}
There is another method to obtain the hyperbolic surfaces
$\Ss_k$ and $\Ss_k^\infty$ using the construction in \cite[Section
4]{BrM} (see also \cite[Chapter 1]{Man}). The start data are our
trivalent graphs $T_k$ with a suitable orientation.

\begin{dfn} An {\em orientation ${\mathcal O}$} on a trivalent graph
  $T$ is a choice, at each vertex $v$ of $T$, of a cyclic ordering of
  the three edges emanating from it.
\end{dfn}

Let us first introduce an orientation ${\mathcal O}_k$ on $T_k$. We
start with the Cayley graph $X_k$ and orient its edges such that they
only carry the Cayley graph labels $x_0, x_1$ and $x_3$, and not their
inverses (see the Figure \ref{fig:DYtrafo} on the left). Every
triangle in $X_k$ forms then an oriented cycle with consecutive labels
$x_0, x_1, x_3$. This orientation induces an orientation on the new
green vertex in $T_k$ corresponding to this triangle, as illustrated
by the oriented green circular arcs in the Figure \ref{fig:DYtrafo} on
the right. A blue vertex $v$ of $T_k$ stems from a vertex of $X_k$,
and we can give the labels $0,1,3$ to the three edges in $T_k$
emanating from $v$, agreeing with the label of the edge in the
corresponding triangle of $X_k$ not adjacent to $v$ (see again Figure
\ref{fig:DYtrafo} for illustration). The orientation of the three
edges emanating from $v$ in $T_k$ (illustrated by an oriented blue
circular arc) is then given by the cyclic ordering $0,3,1$.

Now we follow the explanations in \cite[Sections 1.1-1.4]{Man}
closely. Let ${\mathcal T} \subset \D^2$ be an oriented compact
equilateral hyperbolic triangle with interior angles $\pi/2^{n_k}$.  We
refer to the mid-points of the sides of ${\mathcal T}$ as {\em
  tick-marks}.  The orientation of ${\mathcal T} \subset \D^2$ induces
a cyclic order on these tick-marks. Connect the center of ${\mathcal
  T}$ with the three tick-marks by geodesic arcs and assume that these
arcs are coloured red. Then we paste a copy of ${\mathcal T} \subset
\D^2$ on each vertex $v$ of $T_k$ such that its center agrees with
$v$, its tick-marks agree with the mid-points of the edges of $T_k$
emanating from $v$, and that the cyclic orders of these egdes and of
the tick-marks agree. Observe that, even though the mid-points of
adjacent sides of triangles meet up at mid-points of edges of $T_k$,
we have not yet identified the sides of these triangles. This
identification is made in such a way that the orientations of adjacent
triangles match up. The resulting hyperbolic surface $\Ss_k$ carries
then a global orientation and the union of the red geodesic arcs from
their mid-points to their tick-marks in the triangles provide an
embedding of the graph $T_k$ into this surface such that the faces are
regular $2n_k$-gons. 

The complete non-compact finite area hyperbolic surfaces
$\Ss_k^\infty$ are obtained in the same way by starting instead with
an oriented ideal hyperbolic triangle ${\mathcal T} \subset \D^2$ with
tick-marks. As explained at the end of \cite[Section 1.4]{Man}, the
cusps of $\Ss_k^\infty$ are then in bijection with the left-hand-turn
pathes in $(T_k,{\mathcal O}_k)$. This alternative construction of
$\Ss_k$ and $\Ss_k^\infty$ is later useful in Section
\ref{subsec:confcomp}.


\subsection{Proofs of Propositions \ref{prop:combprops} and \ref{prop:asymptnonflat}}
\label{subsec:combprops}

\begin{proof}[Proof of Proposition \ref{prop:combprops}]
  The orders of $x_0, x_1, x_3 \in G_k$ were given in Lemma
  \ref{lem:xipow}. It was explained in Section \ref{subsec:surfs} that
  the isometry group of $\Ss_k$ has order $\ge |G_k| = 2^{N_k}$, and
  in Section \ref{subsec:XkTK} that $|V_k| = 2|G_k| =
  2^{N_k+1}$. Lemma \ref{lem:xipow} implies that the faces of the
  triangulation $T_k \subset \Ss_k$ are regular $2^{n_k+1}$-gons,
  which yields $2|E_k| = 3|V_k| = 2^{n_k+1}|F_k|$, proving
  \eqref{eq:VkEkFk}. The genus $g(\Ss_k)$ can be derived either from
  Hurwitz's formula \eqref{eq:gSinf} or from the Euler characteristic
  $\chi(\Ss_k) = |V_k| - |E_k| + |F_k|$.  This finishes the proof of
  Proposition \ref{prop:combprops}.
\end{proof}

\begin{rmk} The number of cusps of the surface $\Ss_k^\infty$ agrees
  with the number of faces of the tessellation $T_k \subset
  \Ss_k$. For example, the genus five surface $\Ss_2$ mentioned in
  Section \ref{subsec:surfs} is tessellated into $24$ octagons, and
  the surface $\Ss_2^\infty$ has, therefore, $24$ cusps.
\end{rmk}

\begin{proof}[Proof of Proposition \ref{prop:asymptnonflat}]
  We conclude from \eqref{eq:gSinf}, $|V_k| =
  2|G_k|$, $|E(T_K^*)| = |E_k|$, and the trivalence of $T_k$ that
  $$ \frac{6g(\Ss(T_k))}{|E(T_k^*)|} = 6 \,
  \frac{1+(1-\mu_k)|V_k|/4}{3 |V_k|/2}. $$ Note that $|V_k| = 2|G_k| =
  2^{N_k+1} \to \infty$ because of \eqref{eq:N_k}, which implies that
  $$ \lim_{k \to \infty} \frac{6g(\Ss_k)}{|E(T_k^*)|} = 
  1 - \lim_{k\to\infty} \mu_k. $$ 
  Recall from \eqref{eq:mu} and \eqref{eq:n_k} that $\mu_k =
  3/2^{n_k} \to 0$ as $k \to \infty$, finishing the proof of
  \eqref{eq:asympTk}.
\end{proof}

\section{Spectral properties of the graphs $X_k$ and $T_k$}
\label{sec:mainres}


In this section, we establish the expander properties of $X_k$ and
$T_k$ and relations between their eigenfunctions and eigenvalues,
which proves Theorem \ref{thm:main}. We also investigate Ramanujan
properties of these families of graphs. 

\subsection{Precise relations between eigenfunctions and 
eigenvalues}

As before, let $\widetilde G$ be the group defined in \eqref{eq:G} and
$G$ be the index two subgroup generated by $S$. Then both groups
$\widetilde G$ and $G$ have Kazhdan property (T) (see \cite[Section
3]{PV}). Using \cite[Prop. 3.3.1]{Lub}, we conclude that the Cayley
graphs $X_k = {\rm Cay}(G_k,S)$ are expanders.

The adjacency operator $A_X$, acting on functions on the vertices of a
graph $X$, is defined as
$$ A_Xf(v) = \sum_{w \sim v} f(w), $$
where $w \sim v$ means that the vertices $v$ and $w$ are adjacent.  It
is easy to see that the eigenvalues of the adjacency operator of a
finite $n$-regular graph lie in the interval $[-n,n]$. 

Recall that the set $V(X_k)$ of vertices of our $6$-valent graph $X_k$
is a subset of the vertex set $V_k$ of the trivalent graph $T_k$. We
have the following relations between the eigenfunctions of the
adjacency operators on $X_k$ and $T_k$.

\begin{thm} \label{thm:releigfunc}
  \begin{itemize}
  \item[(a)] Every eigenfunction $F$ on $T_k$ to an eigenvalue
    $\lambda \in [-3,3]$ gives rise to an eigenfunction $f$ to the
    eigenvalue $\mu = \lambda^2 - 3 \in [-3,6]$ on $X_k$ (with
    $f(v) = F(v)$ for all $v \in V(X_k)$).
  \item[(b)] Every eigenfunction $f$ on $X_k$ to an eigenvalue
    $\mu \in [-6,6] - \{-3\}$ gives rise to two eigenfunctions $F_\pm$
    to the eigenvalues $\pm \sqrt{\mu + 3}$ on $T_k$ with
    $$ F_\pm(v) = \begin{cases} f(v) & \text{if $v \in V(X_k)$,} \\
      \pm \frac{1}{\sqrt{\mu+3}} \sum_{w \sim v} f(w) & \text{if $v \in
          V_k-V(X_k)$.} \end{cases}
    $$
  \item[(c)] An eigenfunction $f$ on $X_k$ to the eigenvalue $-3$
    gives rise to an eigenfunction $F$ to the eigenvalue $0$ of $T_k$ with
    $$ F(v) = \begin{cases} f(v) & \text{if $v \in V(X_k)$,} \\
      0 & \text{if $v \in V_k-V(X_k)$,} \end{cases}
    $$
    if and only if we have, for all triangles $\Delta \subset V(X_k)$,
    $\sum_{v \in \Delta} f(v) = 0$.
  \end{itemize}
\end{thm}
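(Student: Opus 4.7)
The plan is to exploit the local combinatorics of the $(\Delta-Y)$-transformation. Denote by $V_\triangle = V_k \setminus V(X_k)$ the set of ``new'' vertices of $T_k$, one for each combinatorial triangle $\Delta$ of $X_k$. Since $X_k$ is $6$-regular and $T_k$ is $3$-regular with $|V_k| = 2|V(X_k)|$, a degree count forces each vertex $v$ of $X_k$ to lie in exactly three triangles, and these three triangles partition the six $X_k$-neighbours of $v$ into three ``opposite-across-a-triangle'' pairs (each edge of $X_k$ lying in a unique triangle). In $T_k$, each $v \in V(X_k)$ is adjacent precisely to the three new vertices $w_\Delta$ with $\Delta \ni v$, and each $w_\Delta$ is adjacent precisely to the three vertices of $\Delta$.

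With this picture in hand, I would first establish the identity that drives the whole proof: for any function $F$ on $V_k$ and any $v \in V(X_k)$,
$$ A_{T_k}^2 F(v) = \sum_{\Delta \ni v}\sum_{v' \in \Delta} F(v') = 3 F(v) + A_{X_k}\bigl(F|_{V(X_k)}\bigr)(v), $$
where the second step uses the opposite-pair partition of the $X_k$-neighbours of $v$. Part~(a) is then immediate: if $A_{T_k} F = \lambda F$ and $f := F|_{V(X_k)}$, the identity reads $\lambda^2 f(v) = 3 f(v) + A_{X_k} f(v)$, i.e.\ $A_{X_k} f = (\lambda^2 - 3) f$. One checks that $f \not\equiv 0$ unless $\lambda = 0$ and $F$ is supported on $V_\triangle$ with vanishing triangle-sums, a degenerate situation absorbed by part~(c).

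For part~(b), given $f$ with $A_{X_k} f = \mu f$ and $\mu \neq -3$, I would extend $f$ to $V_k$ by $F_\pm(v) = f(v)$ for $v \in V(X_k)$ and $F_\pm(w_\Delta) = \pm(\mu+3)^{-1/2}\sum_{v' \in \Delta} f(v')$ for $w_\Delta \in V_\triangle$. A direct verification at both vertex types---at new vertices by the defining formula, at old vertices by feeding $A_{X_k} f = \mu f$ into the identity above---yields $A_{T_k} F_\pm = \pm\sqrt{\mu + 3}\, F_\pm$. For part~(c), the naive extension $F(v) = f(v)$, $F(w_\Delta) = 0$ satisfies $A_{T_k} F(v) = 0$ on $V(X_k)$ automatically, while on $V_\triangle$ the eigenvalue equation $A_{T_k} F(w_\Delta) = 0$ reduces precisely to $\sum_{v' \in \Delta} f(v') = 0$. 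The only substantive ingredient is the opposite-pair combinatorial observation from the first paragraph; everything else is direct bookkeeping on the two types of vertices.
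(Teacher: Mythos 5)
Your proposal is correct and takes essentially the same route as the paper: the central identity $\left(A_{T_k}\right)^2 F(v) = 3F(v) + A_{X_k}\bigl(F|_{V(X_k)}\bigr)(v)$ for $v \in V(X_k)$ is exactly the paper's relation $A_{X_k} = (A_{T_k})^2 - 3$, and parts (b) and (c) are verified by the same case-by-case check at old and new vertices. Your added remark on the non-vanishing of $f = F|_{V(X_k)}$ in part (a) is a small point the paper leaves implicit, but otherwise the arguments coincide.
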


In the following proof, we use $\sim$ for adjacency in $T_k$ and
$\sim_{X_k}$ for adjacency in $X_k$. Moreover, $d_{T_k}$ denotes the
combinatorial distance function on the vertex set $V_k$ of $T_k$.

\begin{proof}
  (a) Let $f$ and $F$ be two functions on $X_k$ and $T_k$, related
  by $f(v) = F(v)$ for all $v \in V(X_k)$. Then
  $$ A_{X_k}f(v) = \sum_{w \sim_{X_k} v} f(w) = 
  \sum_{d_{T_k}(w,v) = 2} F(w) = (A_{T_k})^2 F(v) - 3 F(v), $$ which
  can also be written as $A_{X_k} = (A_{T_k})^2 - 3$. This implies
  immediately the connection between the eigenfunctions and
  eigenvalues.

  (b) Let $A_{X_k}f = \mu f$ and $F_\pm$ be defined as in the theorem.
  Let $\lambda = \pm \sqrt{\mu+3}$. Then we have for $v \in V(X_k)$:
  \begin{eqnarray*}
    A_{T_k}F_\pm(v) &=& \sum_{w \sim v} F_\pm(w) = \frac{1}{\lambda} \sum_{w \sim v}
    \sum_{x \sim w} F_\pm(x)\\
    &=& \frac{1}{\lambda} \left( 
      \sum_{w \sim_{X_k} v} f(w) + 3 f(v) \right)
    = \frac{\mu+3}{\lambda} f(v) = \lambda F_\pm(v), 
  \end{eqnarray*}
  and for $v \in V_k - V(X_k)$:
  $$ A_{T_k}F_\pm(v) = \sum_{w \sim v} F_\pm(w) = 
  \lambda \left( \frac{1}{\lambda} \sum_{w \sim v} f(w) \right) =
  \lambda F_\pm(v). $$ 

  Note that $1/\lambda$ is well defined since $\mu \neq -3$ and,
  therefore, $\lambda = \pm \sqrt{\mu+3} \neq 0$.

  (c) In the case of $\mu = -3$ we have $\lambda=0$, and 
  $$ A_{T_k} F(v) = \sum_{w \sim v} F(w) = 0 $$
  holds trivially for $v \in V(X_k)$. For all vertices $v \in V_k -
  V(X_k)$, the conditions
  $$ 0 = A_{T_k} F(v) = \sum_{w \sim v} f(v) $$
  translate into the condition that the summation of $f$ over the
  vertices of every triangle in $X_k$ must vanish.
\end{proof}

An immediate consequence of Theorem \ref{thm:releigfunc} is that the
expander property of the family $X_k$ carries over to the graphs $T_k$
(with the spectral bounds given in Theorem \ref{thm:main}). Moreover,
the spectrum of $X_k$ cannot contain eigenvalues in the interval
$[-6,-3)$, since this would lead to non-real eigenvalues of
$T_k$. Therefore, these arguments also complete the proof of Theorem
\ref{thm:main}.

\begin{rmk} It would be interesting to find an explicit value for the
  constant $C > 0$ in Theorem \ref{thm:main}. This would be possible
  if we were able to estimate the Kazdhan constant of the index two
  subgroup $G$ of $\widetilde G$ (with respect to some choice of
  generators). While the Kazhdan constant of $\widetilde G$ with
  respect to the the standard set of seven generators was explicitly
  computed in \cite{CMS}, it seems to be a difficult and challenging
  question to obtain an explicit estimate for the Kazhdan constant of
  the subgroup $G$.
\end{rmk}

\subsection{Ramanujan properties}

Recall that a finite $n$-regular graph $X$ is Ramanujan if all
non-trivial eigenvalues $\lambda \neq \pm n$ lie in the interval
$[-2\sqrt{n-1},2\sqrt{n-1}]$. Since the $6$-regular graphs $X_k$ are
Cayley graphs of quotients of the group $G$ with property (T), not all
of these graphs can be Ramanujan (see \cite[Prop. 4.5.7]{Lub}). MAGMA
computations provide the following numerical results:

\medskip

\begin{table}[htbp]
\begin{tabular}{l|l|l}
graph & number of vertices & largest non-trivial eigenvalue \\
\hline\rule[1.8mm]{0cm}{2mm}
$X_2$ & 32 &   2.828427\dots \\ \rule[1.5mm]{0cm}{2mm}
$X_3$ & 128 &  4.340172\dots \\ \rule[1.5mm]{0cm}{2mm}
$X_4$ & 1024 & 4.475244\dots \\ \rule[1.5mm]{0cm}{2mm}
$X_5$ & 8192 & 5.160252\dots
\end{tabular}
\end{table}

This implies that only $X_2$ and $X_3$ are Ramanujan; their largest
non-trivial eigenvalue needs to be $< 2 \sqrt{5} = 4.472135\dots$,
which is no longer true for $k = 4$. Moreover, since $X_{k+1}$ is a
lift of $X_k$, the spectrum of $X_k$ is contained in the spectrum of
$X_{k+1}$.

Next, we consider Ramanujan properties of the graphs $T_k$.  Theorem
\ref{thm:releigfunc} implies that $T_k$ is Ramanujan if and only if
the largest non-trivial eigenvalue of $X_k$ is $\le 5$. Therefore, the
above numerical results imply that only $T_2, T_3, T_4$ are Ramanujan.
Here are the numerical results for the largest {\em non-trivial} eigenvalues
$\lambda_1(T_k)$ of the first $T_k$'s: \medskip

\begin{table}[htbp]
\begin{tabular}{l|l|l}
graph & number of vertices & $\lambda_1(T_k)$ \\
\hline\rule[1.8mm]{0cm}{2mm}
$T_2$ & 64 &   2.414213\dots \\ \rule[1.5mm]{0cm}{2mm}
$T_3$ & 256 &  2.709275\dots \\ \rule[1.5mm]{0cm}{2mm}
$T_4$ & 2048 & 2.734089\dots \\ \rule[1.5mm]{0cm}{2mm}
$T_5$ & 16384 & 2.856615\dots
\end{tabular}
\end{table}

Note that the spectrum of $T_k$ is symmetric around the origin since
the graphs $T_k$ are bipartite. The {\em spectral gap of $T_k$} is
defined as the value $\sigma(T_k) = 3 - \lambda_1(T_k)$ and can
be described variationally as
\begin{equation} \label{eq:sigvarTk}
\sigma(T_k) = \inf \left\{ \frac{\sum_{\{v,w\} \in E_k} 
(F(v)-F(w))^2}{\sum_{v \in V_k} (F(v))^2} \Bigg\vert \sum_{v \in V_k} F(v) = 0 
\right\}. 
\end{equation}

\section{Comparison with Platonic graphs}
\label{sec:isomT2Pi8}

\subsection{Basics about Platonic graphs}

We first recall a few important facts about the Platonic graphs
$\Pi_N$ and the surfaces $\Ss^\infty(\Pi_N) = \Hh^2/\Gamma(N)$. For
more details see, e.g., \cite{ISi}. Let $\F$ be the Farey tessellation
of the hyperbolic upper half plane $\Hh^2$, and let $\Omega(\F)$ be
the set of oriented geodesics in $\F$. Recall that the {\em Farey
  tessellation} is a triangulation of $\Hh^2$ with vertices on the
line at infinity $\R \cup \{ \infty \}$, namely, the subset of
extended rationals $\Q \cup \{ \infty \}$. Two rational vertices with
reduced forms $a/c$ and $b/d$ are joined by an edge, a geodesic of
$\Hh^2$, if and only if $ad-bc = \pm 1$ (see \cite[Fig. 1]{ISi} for an
illustration of the Farey tessellation). The group of conformal
transformations of $\Hh^2$ that leave $\F$ invariant is the modular
group $\Gamma = {\rm PSL}(2,\Z)$, which acts transitively on
$\Omega(\F)$. The {\em principal congruence subgroups} $\Gamma(N)$ are
the normal subgroups given in \eqref{eq:GammaN}.

It is well known (see, e.g, \cite{ISi}) that $\F/\Gamma(N)$ and
$\Pi_N$ are isomorphic, and $\F/\Gamma(N)$ is a triangulation of the
surface $\Ss^\infty(\Pi_N) = \Hh^2/\Gamma(N)$ by ideal triangles (the
vertices are, in fact, the cusps of $\Ss^\infty(\Pi_N)$). The
tessellation $\Pi_N \subset \Ss^\infty(\Pi_N)$ can be interpreted as a
{\em map} $\M_N$ in the sense of Jones/Singerman \cite{JSi}. The group
$\Aut(\M_N)$ of automorphisms of $\M_N$ is the group of orientation
preserving isometries of $\Ss^\infty(\Pi_N)$ preserving the
triangulation. As $\Gamma(N)$ is normal in $\Gamma$, we have that the
map $\M_N$ is {\em regular}, meaning that $\Aut(\M_N)$ acts
transitively on the set of directed edges of $\Pi_N$ (see \cite[Thm
6.3]{JSi}). Moreover, by \cite[Thm 3.8]{JSi},
$$ \Aut(\M_N) \cong \Gamma/\Gamma(N) \cong {\rm PSL}(2,\Z_N). $$
(Note that in the case of a prime power $N = p^r$, ${\rm PSL}(2,\Z_N)$
is the group defined over the ring $\Z_N=\Z/(N \Z)$ and not over the
field ${\mathbb F}_q$ with $q=p^r$ elements.) Let $N \ge 7$. Noticing
that all vertices of $\Pi_N$ have degree $N$, we obtain a smooth
compact surface $\Ss(\Pi_N)$ by substituting every ideal triangle in
$\Pi_N \subset \Ss^\infty(\Pi_N)$ by a compact equilateral hyperbolic
triangle with interior angles $2\pi/N$, and glueing them along their
edges in the same way as the ideal triangles of
$\Ss^\infty(\Pi_N)$. The group of orientation preserving isometries of
$\Ss(\Pi_N)$ preserving this triangulation is, again, isomorphic to
${\rm PSL}(2,\Z_N)$. Hence, the automorphism group of the
triangulation $\Pi_8 \subset \Ss(\Pi_8)$ is ${\rm PSL}(2,\Z_8)$ of
order $192$. This implies that $\Ss(\Pi_8)$ is the unique compact
hyperbolic surface of genus 5 with maximal automorphism group (see
\cite{Battsengel}).

\begin{figure}[htbp]
\begin{center}
\includegraphics[width=\textwidth]{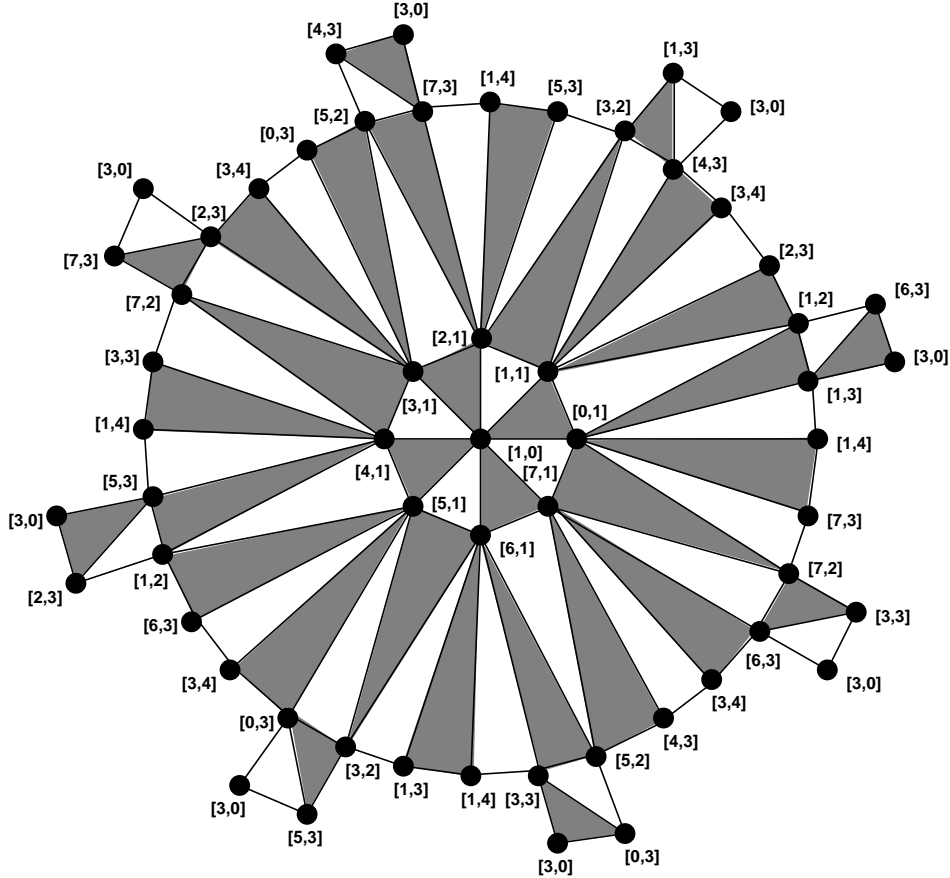}
\caption{The Platonic graph $\Pi_8$: Each triangle corresponds to a
  hyperbolic $(\pi/4,\pi/4,\pi/4)$-triangle of the tessellation of
  $\Ss(\Pi_8)$. The edges along the boundary path are pairwise glued to
  obtain $\Ss(\Pi_8)$.}
\label{fig:s8}
\end{center}
\end{figure}

\subsection{Duality between $T_2$ and $\Pi_8$ in $\Ss_2$}

The $\Pi_8$-triangulation of $\Ss(\Pi_8)$ is illustrated in Figure
\ref{fig:s8}; the black-white pattern on the triangles is a first test
whether this triangulation can be isomorphic to the
$T_2^*$-triangulation of $\Ss_2$. (The $\Pi_N$-triangulations for $3
\le N \le 7$ can be found in Figs. 3 and 4 of \cite{ISi}.)  ${\rm
  PSL}(2,\Z_8)$ acts simply transitively on the directed edges of this
triangulation. Consider now a refinement of this triangulation by
subdividing each $(\pi/4,\pi/4,\pi/4)$-triangle into six
$(\pi/2,\pi/3,\pi/8)$-triangles. It is easily checked that the smaller
$(\pi/2,\pi/3,\pi/8)$-triangles admit also a black-white colouring
such that the neighbours of all smaller black triangles are white
triangles and vice versa (see Figure \ref{fig:triangsubdiv}). Each
black $(\pi/2,\pi/3,\pi/8)$-triangle is in 1-1 correspondence to a
half-edge of $\Pi_8$ which, in turn, can be identified with a directed
edge of $\Pi_8$. Consequently, the orientation preserving isometries
of the surface $\Ss(\Pi_8)$ corresponding to the elements in ${\rm
  PSL}(2,\Z_8)$ act simply transitively on the black
$(\pi/2,\pi/3,\pi/8)$-triangles. In fact, ${\rm PSL}(2,\Z_8)$ can be
interpreted as a quotient of the triangle group $\Delta^+(2,3,8)$,
namely,
$$ {\rm PSL}(2,\Z_8) \cong \langle x^2,y^3,z^8 \mid xyz, 
(xz^2xz^5)^2 \rangle, $$ 
where $x,y,z$ correspond to rotations by $\pi,2\pi/3,\pi/4$ about the
three vertices of a given $(\pi/2,\pi/3,\pi/8)$-triangle.

\begin{figure}[htbp]
\psfrag{A}{$A$}
\psfrag{B}{$B$}
\psfrag{C}{$C$}
\psfrag{Pi8}{$\pi/8$}
\psfrag{Pi6}{$\pi/6$}
\begin{center}
\includegraphics[height=6cm]{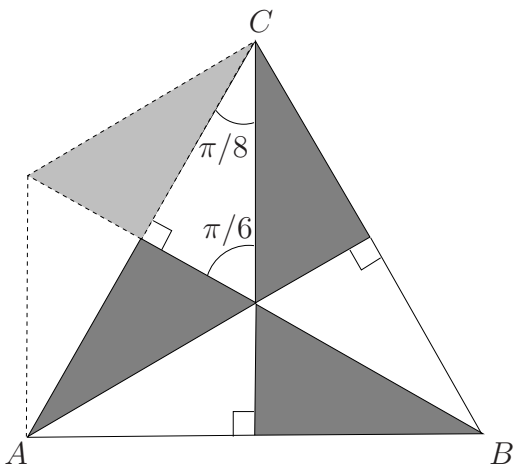}
\caption{Subdivision of the $(\pi/4,\pi/4\,\pi/4)$-triangle $\Delta
  ABC$ into six $(\pi/2,\pi/3,\pi/8)$-triangles with black-white
  colouring.}
\label{fig:triangsubdiv}
\end{center}
\end{figure}

MAGMA computations show that ${\rm PSL}(2,\Z_8)$ has a unique normal
subgroup $N$ of index 6, generated by the elements $X = x^{-1}z^2x$,
$Y = y^{-1}z^2y$ and $Z = z^2$, which is isomorphic to the triangle
group quotient $\Delta^+(4,4,4)/P_2(\Delta^+(4,4,4))$ via the explicit
isomorphism
\begin{equation} \label{eq:XYZ} X \mapsto \begin{pmatrix} -1 & 0 \\ 2
    & -1 \end{pmatrix}, \quad Y \mapsto \begin{pmatrix} -1 & 2 \\ -2 &
    3 \end{pmatrix}, \quad Z \mapsto \begin{pmatrix} 1 & 2 \\ 0 &
    1 \end{pmatrix},
\end{equation}
where $P_2(\Delta^+(4,4,4))$ is a group in the lower exponent-2 series
of $\Delta^+(4,4,4)$ and was defined in \eqref{eq:PkG}. Note that the
matrices in \eqref{eq:XYZ}, viewed as elements in ${\rm PSL}(2,\Z)$,
generate a group acting simply transitively on the black triangles of
the Farey tessellation in $\Hh^2$, as illustrated in Figure
\ref{fig:fargen}. The images of a black triangle ${\mathcal T}$ with
vertices $0,1,\infty$ under $\{ X^{\pm 1}, Y^{\pm 1}, Z^{\pm 1} \}$
are the six black triangles each sharing a common white triangle with
${\mathcal T}$.

\begin{figure}[htbp]
\begin{center}
\includegraphics[width=0.8\textwidth]{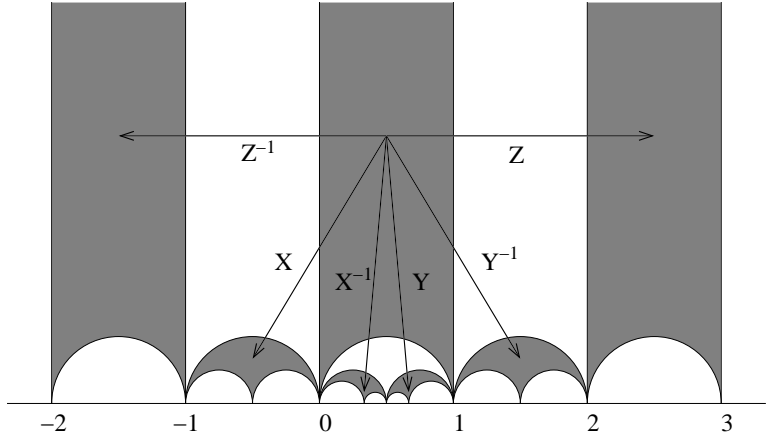}
\caption{The action of the elements $X^{\pm 1},Y^{\pm 1},Z^{\pm 1} \in
  PSL(2,\Z)$ on a triangle ${\mathcal T}$ with vertices $0,1,\infty$
  of the Farey tessellation.}
\label{fig:fargen}
\end{center}
\end{figure}

MAGMA computations also show that we have the explicit isomorphism
$$ N = \langle X,Y,Z \rangle \cong G_2 = \langle x_0,x_1,x_3 \rangle, $$
given by $X \mapsto x_0, Y \mapsto x_1, Z \mapsto x_3$. The normal
group $N \triangleleft {\rm PSL}(2,\Z_8)$ is of order 32 and the
quotient $\Ss_0 = \Ss(\Pi_8)/N$ is an orbifold consisting of two
hyperbolic $(\pi/4,\pi/4,\pi/4)$-triangles (one of them black and the
other white). We conclude from the explicit isomorphism $N \cong G_2$
that the covering procedure discussed in Section \ref{subsec:surfs}
leads to isometric surfaces $\Ss(\Pi_8) \cong \Ss_2$, and that the
tessellation $\Pi_8 \subset \Ss(\Pi_8)$ is dual to the tessellation
$T_2 \subset \Ss(T_2)$ via this isometry of surfaces. This confirms
the first statement of Proposition \ref{prop:isomt2s8thm:main}.




\begin{rmk} For $k=2$, the $(\Delta-Y)$-transformation $X_2 \to T_2$
  has a group theoretical interpretation. There exists a group
  extension $\widetilde{G_2}$ of $G_2$ by $\Z_2$, generated by
  involutions $A,B,C$ satisfying $X = AB$, $Y = BC$ and $Z = CA$, and
  $T_2$ is the Cayley graph of $\widetilde{G_2}$ with respect to the
  generators $A,B,C$. This group theoretic interpretation of the
  $(\Delta-Y)$-transformation {\em fails} for $k \ge 5$.  In fact, the
  group
  $$ T = \langle A,B,C \mid A^2, B^2, C^2,r_1(AB,BC),r_2(AB,BC),r_3(AB,BC) 
  \rangle $$ with $r_1,r_2,r_3$ given in \eqref{eq:r1r2r3} is finite
  and of order 6144. If the introduction of the above involutions
  $A,B,C$ would lead to a group extension $\widetilde{G_k}$, then
  $\widetilde{G_k}$ would have to be of order $2 |G_k|$ and a quotient
  of $T$ and, therefore, of order $\le 6144$. However, we have $2
  |G_5| = 16384$ in contradiction to the second condition. Thus we do
  not obtain a Cayley graph representation of the graphs $T_k$ for $k
  \ge 5$ via this procedure.
\end{rmk}

\subsection{Non-duality of $T_k \subset \Ss_k$ and Platonic
  graphs for $k \ge 3$}

Let $V(\Pi_N)$ denote the vertex set of $\Pi_N$. Then we have
$$ |V(\Pi_N)| = \frac{N^2}{2} \prod_{p | N} \left( 1 - \frac{1}{p^2} 
\right), $$ 
where the product runs over all primes $p$ dividing $N$. This formula
can be found in \cite[p. 441]{ISi}, where $\Pi_N$ is viewed as a
triangular map and denoted by ${\mathcal M}_3(N)$. An isomorphism
$T_k^* \cong \Pi_N$ leads to $N = 2^{n_k+1}$, since all vertices of
$T_k^*$ have degree $2^{n_k+1}$ (see Proposition \ref{prop:combprops})
and all vertices of $\Pi_N$ have degree $N$. In this case, the formula
for the number of vertices of $\Pi_N$ simplifies to
$$ |V(\Pi_{2^{n_k+1}})| = \frac{2^{2n_k+2}}{2} \left( 1 - \frac{1}{4} \right)
= 3 \cdot 2^{2n_k-1}. $$
On the other hand, if $V(T_k^*)$ denotes the vertex set of $T_k^*$, we
conclude from Proposition \ref{prop:combprops},
$$ |V(T_k^*)| = 3 \cdot 2^{N_k-n_k}. $$
Hence, an isomorphism $T_k^* \cong \Pi_N$ leads to the identity
$2n_k-1 = N_k-n_k$, i.e.,
$$ 3 \lfloor \log_2 k \rfloor +3 =  3n_k = N_k+1 
\ge 8 \lfloor k/3 \rfloor + 3 \cdot (k\, {\rm mod}\, 3), $$ by
\eqref{eq:n_k} and \eqref{eq:N_k}. But one easily checks that this
inequality holds only for $k=1,2$. (In the case $k=1$, we have $\Pi_4
= T_1^*$, since $T_1$ is combinatorially the cube and $\Pi_4$ is the
octagon.) This shows that the graph family $\Pi_N$ cannot contain any
of the dual graphs $T_k^*$, for indices $k \ge 3$. This finishes the
proof of Proposition \ref{prop:isomt2s8thm:main}.

\section{Spectral properties of the closed hyperbolic surfaces 
$\Ss_k$ and $\widehat \Ss_k$}


\subsection{A lower eigenvalue estimate for the surfaces $\Ss_k$}
\label{subsec:corlambda}

The goal of this section is to prove Corollary
\ref{cor:lowerlambda1}. Let us start with lower estimates on the first
nontrivial Neumann eigenvalues of special bounded sets
$S \subset {\mathbb H}^2$ with piecewise smooth boundaries,
characterized by
\begin{equation} \label{eq:nuS} 
\nu(S) := \inf\left\{ \int_S \Vert {\rm grad}\, f \Vert^2 \Big\vert 
\text{$f \in C^\infty(S)$ with $\int_S f^2 = 1$ and $\int_S f = 0$} 
\right\}. 
\end{equation}

\begin{lem} \label{lem:quad} For $k \ge 2$, let
  ${\mathcal Q}(k) \subset {\mathbb H}^2$ be a quadrilateral obtained
  as the union of two adjacent equilateral hyperbolic triangles with
  interior angles $\pi/2^k$. Then we have for all $k \ge 2$,
  $$ \nu({\mathcal Q}(k)) > \frac{1}{4}. $$
\end{lem}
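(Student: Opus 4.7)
The plan is to apply Cheeger's inequality $\nu(\mathcal{Q}(k)) \ge h(\mathcal{Q}(k))^2/4$, which reduces the problem to proving the strict geometric bound $h(\mathcal{Q}(k)) > 1$ for every $k \ge 2$, where
\[ h(\mathcal{Q}(k)) = \inf_A \frac{L(\partial A \cap \mathrm{int}\,\mathcal{Q}(k))}{\min(|A|,\,|\mathcal{Q}(k)|-|A|)} \]
denotes the Neumann Cheeger constant of $\mathcal{Q}(k)$, with the infimum taken over measurable subsets $A \subset \mathcal{Q}(k)$.

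The main analytic input is the sharp hyperbolic isoperimetric inequality $L(\partial A)^2 \ge |A|^2 + 4\pi|A|$, valid for every bounded Borel set $A \subset \mathbb{H}^2$ with equality only for geodesic discs. Decomposing the total perimeter as $L(\partial A) = \ell + m$, with $\ell$ the cut length inside $\mathcal{Q}(k)$ and $m$ the length of the portion of $\partial A$ lying along $\partial \mathcal{Q}(k)$, one immediately obtains $\ell > |A| - m$. The difficulty is to control $m$: when $A$ is a thin boundary-hugging collar, $m$ can be comparable to $|A|$ and the plain isoperimetric bound is insufficient to force $\ell > |A|$.

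The key step is therefore to rule out the boundary-hugging regime. Since $\mathcal{Q}(k)$ is compact with piecewise-geodesic boundary, a Cheeger minimiser exists, and its interior boundary is a connected constant-geodesic-curvature arc meeting $\partial \mathcal{Q}(k)$ orthogonally. A short case analysis—indexed by which pair of edges (or single vertex) of $\mathcal{Q}(k)$ the two endpoints of this arc lie on—reduces the problem to finitely many geometrically explicit configurations. The hard part is that in each such configuration one has to carry out explicit hyperbolic trigonometry on the sector or strip cut out of $\mathcal{Q}(k)$ and verify $\ell/|A| > 1$; the borderline value $\ell/|A| = 1$ would correspond to horocyclic cuts near an ideal vertex, which cannot occur because the vertex angles of $\mathcal{Q}(k)$ are $\pi/2^k$ and $\pi/2^{k-1}$, both strictly positive for $k \ge 2$. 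Assembling these case estimates yields $h(\mathcal{Q}(k)) > 1$ and, via Cheeger's inequality, the desired bound $\nu(\mathcal{Q}(k)) > 1/4$.
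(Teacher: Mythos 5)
Your overall strategy (reduce to a lower bound on the Cheeger constant of ${\mathcal Q}(k)$) is the same as the paper's, but the quantitative target you set yourself, $h({\mathcal Q}(k))>1$, is \emph{false} for $k\ge 3$, so the proof cannot be completed along these lines. Concretely, write $\alpha=\pi/2^k$ and let $\gamma$ be the common perpendicular between the two opposite sides $CA$ and $BD$ of the rhombus ${\mathcal Q}(k)=\triangle ABC\cup\triangle BCD$. The trirectangle computation carried out in the paper gives $\cosh \ell(\gamma)=\cos\alpha\,(1+2\cos\alpha)<3$, so $\ell(\gamma)<\operatorname{arccosh}(3)\approx 1.763$ for every $k$. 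On the other hand, the rotation by $\pi$ about the centre of the rhombus interchanges the sides $CA$ and $BD$, hence fixes $\gamma$; so $\gamma$ passes through the centre and cuts ${\mathcal Q}(k)$ into two congruent pieces, each of area $\pi-3\alpha$. Already for $k=3$ this area is $5\pi/8\approx 1.963$ while $\ell(\gamma)\approx 1.622$, so $h({\mathcal Q}(3))\le 0.83$, and as $k\to\infty$ one gets $h({\mathcal Q}(k))\le\operatorname{arccosh}(3)/\pi\approx 0.56$. Thus your programme of ``verifying $\ell/|A|>1$ in each configuration'' necessarily fails in the opposite-sides configuration; the isoperimetric inequality $L(\partial A)^2\ge |A|^2+4\pi|A|$ cannot rescue it there, because for this competitor almost all of $\partial A$ lies on $\partial{\mathcal Q}(k)$. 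With the true values of $h$, the inequality $\nu\ge h^2/4$ yields at best $\nu\gtrsim 0.08$, well short of $1/4$.

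The paper sidesteps this by demanding far less of the Cheeger constant: it proves only $h({\mathcal Q}(k))\ge 1.128\ldots/\pi>1/4$ and then passes to $\nu({\mathcal Q}(k))>1/4$ via the eigenvalue--Cheeger relation it cites from Buser (Thm.~8.3.3), rather than via the crude $\nu\ge h^2/4$. Moreover, the length bound is obtained not from the isoperimetric inequality but from Buser's classification of admissible separating curves in a quadrilateral (closed curves; arcs with both endpoints on one side, on adjacent sides, or on opposite sides), of which only the last needs an explicit estimate, supplied by hyperbolic trigonometry in a trirectangle. Two further, secondary, issues with your write-up: the entire substantive content (the ``explicit hyperbolic trigonometry on the sector or strip cut out'' in each configuration) is deferred and never executed, and the regularity statements about a Cheeger minimiser (existence, connectedness and constant curvature of the free boundary, orthogonal incidence with $\partial{\mathcal Q}(k)$) are asserted without justification. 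But the decisive defect is the false intermediate claim $h({\mathcal Q}(k))>1$.
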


\begin{proof} The result follows via Cheeger's inequality (see, e.g.,
  \cite[Thm. 8.3.3]{Bu}), once it is shown that
  $$ \frac{1}{4} \le h({\mathcal Q}(k)) = \inf \frac{\ell(\gamma)}
  {\min\{{\rm area}\, A, {\rm area}\, A'\}}, $$
  where $\gamma$ runs through all curves decomposing ${\mathcal Q}(k)$ into
  two connected, relatively open subsets $A$ and $A'$. Then we must
  have one of the following cases: (i) $\gamma$ is a closed curve, (ii) $\gamma$
  is an arc with both endpoints on the same side of ${\mathcal Q}(k)$,
  (iii) $\gamma$ is an arc having endpoints in adjacent sides of
  ${\mathcal Q}(k)$, and (iv) $\gamma$ is an arc having endpoints in
  opposite sides of ${\mathcal Q}(k)$. 

  \begin{figure}[htbp]
    \psfrag{A}{$A$}
    \psfrag{B}{$B$}
    \psfrag{C}{$C$}
    \psfrag{D}{$D$}
    \psfrag{E}{$E$}
    \psfrag{F}{$F$}
    \psfrag{c}{$c$}
    \psfrag{c'}{$c'$}
    \psfrag{a}{$\alpha$}
    \psfrag{a/2}{$\alpha/2$}
    \psfrag{h}{$h$}
    \begin{center}
      \includegraphics[height=5cm]{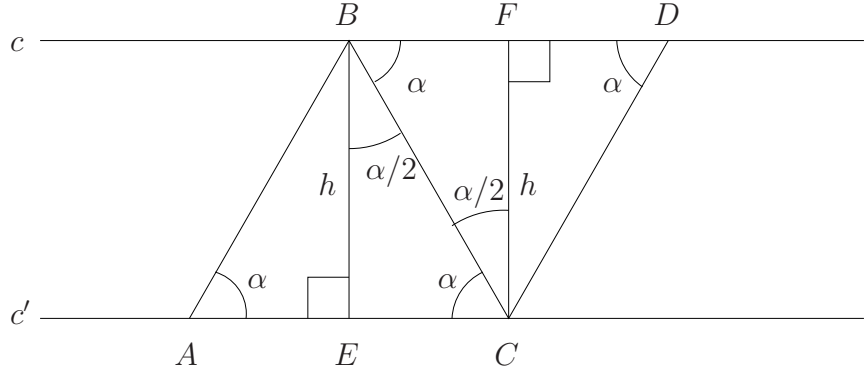}
      \caption{The quadrilateral ${\mathcal Q}(k)$ as union of the two
        triangles $\Delta ABC$ and $\Delta BCD$.}
      \label{fig:quad1}
    \end{center}
  \end{figure}

  Due to \cite[p.220]{Bu}, the only remaining case to consider is case
  (iv). Let $\alpha = \pi/2^k$ and
  ${\mathcal Q}(k) = \triangle ABC \cup \triangle BCD$, as illustrated in
  Figure \ref{fig:quad1}. We assume the endpoints of $\gamma$ lie on the
  opposite sides $AC$ and $BD$, whose bi-infinite geodesic extensions
  are $c$ and $c'$. Then we have obviously $\ell(\gamma) \ge d(c,c')$. If
  $h$ denotes the length of the heights of $\triangle ABC$ and
  $\triangle BCD$, then we conclude from \cite[Thm. 2.2.2]{Bu} that
  \begin{equation} \label{eq:coshh} 
  \cosh(h) = \frac{\cos(\alpha)}{\sin(\alpha/2)}. 
  \end{equation}
  It is easy to see from $3\alpha/2 < \pi/2$ that if $B_t$ denotes the
  point on $c$ at distance $t = d(B,B_t)$ from $B$ in the direction of
  $F$, then $t \to d(B,B_t)$ is initially strictly monotone
  decreasing. The same holds for the point $C_t \in c'$ at distance
  $t$ from $C$ in the direction of $E$. As a consequence, the unique
  minimal distance points $X \in c$ and $X' \in c'$, i.e.,
  $d(X,X') = d(c,c')$, must lie in the segments $BF$ and $EC$,
  respectively, and the quadrilateral $\square XX'CF$ is a
  trirectangle. Then we conclude from \cite[Thm. 2.3.1]{Bu} that
  $$ \cosh(d(X,X')) = \cosh(h) \sin(3\alpha/2). $$
  Combining this with \eqref{eq:coshh}, we obtain, using
  $\alpha \le \frac{\pi}{4}$:
  $$ \cosh(\ell(\gamma)) \ge \frac{\cos(\alpha) \sin(3\alpha/2)}{\sin(\alpha/2)}
  = \cos(\alpha) (1+2\cos(\alpha) ) \ge \frac{1+\sqrt{2}}{\sqrt{2}},$$
  i.e., $\ell(\gamma) \ge 1.12838\dots$. Since
  ${\rm area}\, \triangle ABC = \pi-3\alpha \le \pi$, we conclude
  that
  $$ \frac{\ell(\gamma)}{\min\{ {\rm area}\, A, {\rm area}\, A' \}} \ge 
  \frac{1.12838}{\pi} > \frac{1}{4}, $$
  finishing the proof of $\nu({\mathcal Q}(k)) > 1/4$. 
\end{proof}

Let ${\mathcal S} = {\mathcal S}_k$. Note that ${\mathcal S}$ comes
with a tessellation $T_k = (V_k,E_k,F_k)$ by regular polygons. To
bound the first nontrivial eigenvalue $\lambda_1({\mathcal S})$ of
$\mathcal S$ from below by the spectral gap $\sigma(T_k)$ of the
tessellation $T_k = (V_k,E_k.F_k)$, we employ the Brooks-Burger
transfer principle and follow closely the arguments given in
\cite{Breu}, which are a variant of Burger's arguments \cite{Bur}. For
the reader's convenience, we present them here.

\begin{proof}[Proof of Corollary \ref{cor:lowerlambda1}]
Let ${\mathcal S} = {\mathcal S}_k$ and $f \in C^\infty({\mathcal S})$ be 
the normalized eigenfunction to the eigenvalue $\lambda_1 = 
\lambda_1({\mathcal S})$, i.e.,
$$ \Delta f = \lambda_1 f \qquad \text{and} \quad \int_{\mathcal S} f^2 =1. $$
Then $f$ is orthogonal to the constant function, i.e., $\int_{\mathcal S} f = 0$
and satisfies $\int_{\mathcal S} \Vert {\rm grad}\, f \Vert^2 = \lambda_1$.  

Now we associate to $f$ a corresponding function $F$ on the
set of vertices $V_k$ of the tessellation $T_k \subset {\mathcal
  S}$. For every vertex $v \in V_k$, there is an equilateral hyperbolic
triangle ${\mathcal T}(v) \subset {\mathcal S}$ with interior angles
$\pi/2^{n_k}$ of the dual tessellation containing $v$. In fact, we
have
$$ {\mathcal T}(v) = \{ z \in {\mathcal S} \mid 
d(z,v) \le d(z,w)\ \text{for all $w \in V_k$} \}. $$
The function $F: V_k \to {\mathbb R}$ is now defined as follows
$$ F(v) = \frac{1}{V} \int_{{\mathcal T}(v)} f, $$
where $V = {\rm area}({\mathcal T}(v))$, 
and we have $V \sum_{v \in V_k} F(v) = \int_{\mathcal S} f = 0$.

Our next goal is to compare the Rayleigh quotients of $f$ and $F$. The
characterisaion \eqref{eq:nuS} implies that we have the following Poincar{\'e} inequalities
for adjacent vertices $v,w \in V_k$:
\begin{eqnarray*}
\int\limits_{{\mathcal T}(v)} (f-F(v))^2 &<& \frac{1}{\nu({\mathcal T}(v))} 
\int\limits_{{\mathcal T}(v)} \Vert {\rm grad}\, f \Vert^2, \\
\int\limits_{{\mathcal T}(v) \cup {\mathcal T}(w)} \left( f - \frac{F(v)+F(w)}{2} 
\right)^2 &<& \frac{1}{\nu({\mathcal T}(v) \cup {\mathcal T}(w))}  
\int\limits_{{\mathcal T}(v) \cup {\mathcal T}(w)} \Vert {\rm grad}\, f \Vert^2.
\end{eqnarray*}
Using \cite[(8.4.1)]{Bu} and Lemma \ref{lem:quad}, we conclude that
both inequalities above hold by chosing the same coefficient $1/ \nu$
with $\nu = 1/4$ at the right hand sides.

Using
$$ \frac{1}{2} \left( \frac{F(v)-F(w)}{2} \right)^2 \le 
\left( f(z) - \frac{F(v)+F(w)}{2} \right)^2 + ( f(z) - F(v) )^2 $$
and the Poincar{\'e} inequalities, we obtain
\begin{multline*}
V \left( \frac{F(v)-F(w)}{2} \right)^2 \\ = 
\int_{{\mathcal T}(v)} \frac{1}{2} \left( \frac{F(v)-F(w)}{2} \right)^2 +
\int_{{\mathcal T}(w)} \frac{1}{2} \left( \frac{F(v)-F(w)}{2} \right)^2 \\ < 
\frac{2}{\nu} \int_{{\mathcal T}(v) \cup {\mathcal T}(w)} \Vert {\rm grad}\, f \Vert^2,
\end{multline*}
leading to
\begin{equation} \label{eq:gradF} 
V \sum_{\{v,w\} \in E_k} (F(v)-F(w))^2 < 
\frac{8 D}{\nu} \int_{\mathcal S} \Vert {\rm grad}\, f \Vert^2 = 
8 D \frac{\lambda_1}{\nu} 
\end{equation}
after summation over the edges, and using the fact that $T_k$ has
vertex degree $D=3$. On the other hand, using the first Poincar{\'e}
inequality again we have
$$ V (F(v))^2 = \int_{{\mathcal F}(v)} f^2 - \int_{{\mathcal F}(v)} (f-F(v))^2
> \int_{{\mathcal F}(v)} f^2 - \frac{1}{\nu} 
\int_{{\mathcal F}(v)} \Vert {\rm grad}\, f \Vert^2, $$
and, after summing over the vertices of $T_k$, 
\begin{equation} \label{eq:F} 
V \sum_{v \in V_k} (F(v))^2 > 1 - \lambda_1/\nu. 
\end{equation}
Combining \eqref{eq:gradF} and \eqref{eq:F}, we have either
$\lambda_1 \ge 1/4 = \nu$ or
$$ \sigma(T_k) \le \frac{\sum_{\{v,w\} \in E_k} (F(v)-F(w))^2}
{\sum_{v \in V_k} (F(v))^2} < \frac{8 D \lambda_1/\nu}{1-\lambda_1/\nu}, $$
which implies, in either case,
$$ \lambda_1 > \nu\left( \frac{\sigma(T_k)}{8D+\sigma(T_k)} \right) =
\frac{1}{4}\left( \frac{\sigma(T_k)}{24+\sigma(T_k)} \right) . $$

The Corollary follows now directly from Theorem \ref{thm:main}(ii)
since $\sigma(T_k) = 3- \sqrt{C+3} > 0$ with the constant $C \in (0,6)$
given in the theorem.
\end{proof}

\subsection{Conformal compactifications}
\label{subsec:confcomp}

We start with the definition of a conformal compactification of a
hyperbolic surface as given, e.g., in \cite{Br3}.

\begin{dfn} Let ${\mathcal S}^\infty$ be a complete non-compact finite
  area hyperbolic surface with $k$ cusps. The {\em conformal
    compactification of ${\mathcal S}^\infty$} is the unique closed
  Riemann surface $\mathcal S$ with $k$ points
  $\{ p_1, \dots, p_k \} \subset {\mathcal S}$ such that
  ${\mathcal S}^\infty$ is conformally equivalent to
  ${\mathcal S} \backslash \{p_1,\dots,p_k\}$.
\end{dfn}

The conformal compactification of ${\mathcal S}^\infty$ can be
constructed as follows: Choose disjoint neighbourhoods of the cusps of
${\mathcal S}^\infty$ such that every such neighbourhood is
conformally equivalent to a punctured disk.  Fill in each disk the
missing point and choose the unique conformal structure of this disk,
and you obtain a compact surface $\mathcal S$ conformally equivalent
to ${\mathcal S} \backslash \{p_1,\dots,p_k\}$. For more details and
many explicit examples, we refer the readers to \cite{Br3,BrM,Man}.

It is not always the case that the conformal compactification
$\mathcal S$ carries a hyperbolic metric. For example, the conformal
compactification of the hyperbolic ${\mathcal S}_0^\infty$ in Section
\ref{subsec:surfs}, obtained by glueing together two ideal hyperbolic
triangles ${\mathcal T}_1^\infty$ and ${\mathcal T}_2^\infty$, is the
Riemann sphere which only carries a metric of positive constant
curvature. A sufficient condition to guarantee that the conformal
compactification $\mathcal S$ carries a hyperbolic metric which, in an
appropriate sense, is even geometrically close to the hyperbolic
structure of ${\mathcal S}^\infty$, is the so-called {\em large cusp
  condition} (see, e.g., \cite[Def. 2.1]{Br3} or
\cite[Def. 2.2.1]{Man}):

\begin{dfn} A hyperbolic surface ${\mathcal S}^\infty$ {\em has cusps of
  length $\ge L$} if, for every cusp of ${\mathcal S}^\infty$, there
  exists a closed horocycle of length $\ge L$ about it, and if all
  these horcycles are disjoint.
\end{dfn}

In fact, it can be shown (see \cite[Thm. 2.3.1]{Man}) that if all
cusps of ${\mathcal S}^\infty$ have length bigger than $2\pi$, then
its conformal compactification carries a hyperbolic metric. Following
arguments explained in detail in \cite{Man}, we show that our closed
surfaces ${\mathcal S}_k$ are the conformal compactifications of the
surfaces ${\mathcal S}_k^\infty$.

\begin{prop} \label{prop:confcomp} The closed surfaces
  ${\mathcal S}_k$ of Section \ref{sec:combpropssurf} are the
  conformal compactifications of the complete non-compact finite area
  hyperbolic surfaces ${\mathcal S}_k^\infty$.
\end{prop}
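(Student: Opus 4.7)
My approach is to verify the large cusp condition for $\Ss_k^\infty$ and to identify its hyperbolic conformal compactification with $\Ss_k$ by exploiting the parallel constructions in Section \ref{subsec:dirconst}. Recall from Section \ref{subsec:dirconst} that both $\Ss_k$ and $\Ss_k^\infty$ are built from the same oriented trivalent graph $(T_k,\mathcal{O}_k)$ by pasting oriented hyperbolic triangles at each vertex: compact equilateral triangles with interior angles $\pi/2^{n_k}$ for $\Ss_k$, and ideal triangles for $\Ss_k^\infty$. The cusps of $\Ss_k^\infty$ are in bijection with the left-hand-turn paths of $(T_k,\mathcal{O}_k)$, and these paths are themselves in bijection with the faces of the tessellation $T_k \subset \Ss_k$.

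First I would compute the cusp lengths. Under the normalization used in Section \ref{subsec:dirconst}, the tick-marks of each ideal triangle are the points at which the inscribed horocycles meet its sides, so each horocyclic arc cut off inside a single ideal triangle adjacent to a given cusp has length exactly $1$ (see \cite[Section 1.4]{Man}). By Proposition \ref{prop:combprops}, every face of $T_k \subset \Ss_k$ is a regular $2^{n_k+1}$-gon, so the corresponding left-hand-turn path has $2^{n_k+1}$ edges, and the closed canonical horocycle around each cusp of $\Ss_k^\infty$ therefore has total length exactly $2^{n_k+1}$. For $k \ge 2$ we have $n_k \ge 2$, so this common length is at least $8 > 2\pi$, and these horocycles are mutually disjoint by construction. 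Thus $\Ss_k^\infty$ satisfies the large cusp condition with $L>2\pi$, so by \cite[Thm. 2.3.1]{Man} its conformal compactification $\mathcal{S}$ admits a (unique) hyperbolic metric.

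To identify $\mathcal{S}$ with $\Ss_k$, let $C_k \subset \Ss_k$ denote the set of face centres of $T_k \subset \Ss_k$. Outside small neighbourhoods of $C_k$ (respectively of the cusps of $\Ss_k^\infty$), the two surfaces are assembled from identical hyperbolic pieces --- the truncated equilateral/ideal triangles sitting at each vertex of $T_k$, joined along the same pattern of tick-marks determined by $(T_k,\mathcal{O}_k)$ --- and this produces a natural conformal homeomorphism $\varphi: \Ss_k\setminus C_k \to \Ss_k^\infty$. The large cusp inequality $L > 2\pi$, applied through the local analysis in \cite[Chapter 2]{Man}, guarantees that near each cusp of $\Ss_k^\infty$ the cusp chart is conformally a punctured disc which matches the punctured disc chart of $\Ss_k$ around the corresponding centre in $C_k$; hence $\varphi$ extends to a biholomorphism $\Ss_k \cong \mathcal{S}$. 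By the uniqueness of the conformal compactification, $\Ss_k$ is the conformal compactification of $\Ss_k^\infty$. The main obstacle is this final conformal matching at the punctures: one has to verify that the complex structure of $\Ss_k$ near each face centre $c \in C_k$ agrees, via $\varphi$, with the complex structure on $\Ss_k^\infty$ near the corresponding cusp, and this is exactly the point at which the quantitative estimate $L > 2\pi$ is essential.
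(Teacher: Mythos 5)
Your verification of the large cusp condition is essentially the paper's argument: the cusps of $\Ss_k^\infty$ correspond to left-hand-turn paths in $(T_k,\mathcal{O}_k)$ of length $2^{n_k+1}\ge 8>2\pi$, the horocyclic normalization gives arcs of length $1$ per ideal triangle, and \cite[Thm.~2.3.1]{Man} then provides a hyperbolic metric on the conformal compactification. The gap is in the identification step. You assert a ``natural conformal homeomorphism'' $\varphi:\Ss_k\setminus C_k\to\Ss_k^\infty$ on the grounds that the two surfaces are ``assembled from identical hyperbolic pieces''. They are not: a truncated compact equilateral triangle with angles $\pi/2^{n_k}$ and a truncated ideal triangle are different hyperbolic polygons, there is no identity map between them, and gluing combinatorially identical patterns of geometrically different pieces does not by itself produce a conformal equivalence. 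The existence of a conformal homeomorphism between the two assembled surfaces away from the face centres/cusps is precisely the content of the proposition, so asserting it from the parallel combinatorics is circular. (A correct argument in your spirit exists --- each triangle, compact or ideal, is conformally a disc with three marked boundary points, the Riemann map between them respects the full symmetry and hence the tick-marks, and Schwarz reflection assembles these into a global conformal map; this is the Belyi-cover viewpoint --- but that is a nontrivial claim you would have to make and justify.) You also misattribute the role of the estimate $L>2\pi$: once a conformal map off the punctures exists, its extension across the punctures is automatic by removability of isolated singularities; the large cusp condition is needed only to guarantee that the compactification carries a hyperbolic metric with controlled geometry.

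The paper sidesteps the direct construction of $\varphi$ entirely. It uses that $G_k$ acts by isometries on $\Ss_k^\infty$, hence induces isometries of the compactification $\Ss_{\rm comp}$; the quotient $\Ss_{\rm comp}/G_k$ is an orbifold built from two compact triangles whose cone angles are forced by the orders of $x_0,x_1,x_3$, hence is isometric to $\Ss_0=\Ss_k/G_k$; and this isometry of quotient orbifolds lifts to an isometry $\Ss_k\cong\Ss_{\rm comp}$. Some argument of this kind (group action on the compactification, or an explicit monodromy/Riemann-existence comparison) is needed to close your identification step.
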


\begin{proof} Let $k \ge 2$ be fixed. Recall that the group
  $G_k$ acts on ${\mathcal S}_k^\infty$ by isometries and that
  ${\mathcal S}_k^\infty$ has a geodesic triangulation by (black and
  white) ideal triangles. We noted at the end of Section
  \ref{subsec:dirconst} that the cusps of ${\mathcal S}_k^\infty$ are
  in bijection with left-hand-turn pathes in $(T_k,{\mathcal O}_k)$ of
  length $2^{n_k+1} \ge 8 > 2\pi$ (with $n_k$ given in \eqref{eq:n_k}). This
  implies that the large cusp condition (with cusp lengths increasing
  in $k$) is satisfied and the conformation compactification
  ${\mathcal S}_{\rm comp}$ of ${\mathcal S}_k^\infty$ carries a
  hyperbolic metric with a corresponding geodesic
  triangulation. Moreover, any isometry of ${\mathcal S}_k^\infty$
  induces a corresponding isometry of ${\mathcal S}_{\rm comp}$. For
  more details on these facts see, e.g., \cite[pp. 13]{Man}. Therefore
  $G_k$ acts also on ${\mathcal S}_{\rm comp}$ by isometries, and
  ${\mathcal S}_{\rm comp} / G_k$ is an orbifold with a triangulation
  consisting of two compact triangles whose vertices are the singular
  points. The orders of the generators $x_0, x_1, x_3$ of $G_k$
  determine the angles of these two triangles uniquely, and we
  conclude that ${\mathcal S}_{\rm comp} / G_k$ is isometric to the
  orbifold ${\mathcal S}_0 = {\mathcal S}_k / G_k$ introduced in
  Section \ref{subsec:surfs}. This isometry lifts then to a
  corresponding isometry between ${\mathcal S}_k$ and
  ${\mathcal S}_{\rm comp}$.
\end{proof}

We like to mention the following consequence of Proposition
\ref{prop:confcomp}.

\begin{cor} \label{cor:cheegSk} There are uniform lower positive bounds
  for the Cheeger constants of the families of surfaces
  ${\mathcal S}_k$ and ${\mathcal S}_k^\infty$.
\end{cor}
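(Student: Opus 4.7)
The plan is to combine the expander property of the trivalent graphs $T_k$ established in Theorem \ref{thm:main}(ii) with the Brooks--Makover transfer principle between graphs and associated hyperbolic surfaces, exactly as anticipated in the introduction via Theorems 3.3 and 4.2 of \cite{BrM}.

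First I would observe that Theorem \ref{thm:main}(ii) provides a uniform positive spectral gap $\sigma(T_k) = 3 - \sqrt{C+3} > 0$ for the bipartite trivalent graphs $T_k$. By the discrete Cheeger inequality applied to a $3$-regular graph, this spectral gap translates into a uniform positive lower bound $h(T_k) \ge h_0 > 0$ on the combinatorial Cheeger constants of the family $(T_k)_{k \ge 2}$.

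Next I would apply Theorem 3.3 of \cite{BrM} to the graphs $T_k$, equipped with the orientation $\mathcal{O}_k$ introduced in Section \ref{subsec:dirconst}. Since the model ideal triangle $\mathcal{T} \subset \mathbb{D}^2$ used to build $\Ss_k^\infty$ is the same for every $k$ and the vertex degree is fixed at $3$, the constants in that transfer theorem depend only on uniform geometric data; hence $h(T_k) \ge h_0$ propagates to a uniform positive lower bound on the geometric Cheeger constant $h(\Ss_k^\infty)$.

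Finally, to transfer the bound from $\Ss_k^\infty$ to the compact surface $\Ss_k$, I would invoke Proposition \ref{prop:confcomp}, which identifies $\Ss_k$ with the conformal compactification of $\Ss_k^\infty$. Its proof shows that every cusp of $\Ss_k^\infty$ has length $2^{n_k+1} \ge 8 > 2\pi$, so the large-cusp condition is satisfied uniformly in $k$. Theorem 4.2 of \cite{BrM} then gives an estimate for the Cheeger constant of a conformal compactification of a finite-area hyperbolic surface in terms of the Cheeger constant of the original surface and the cusp-length parameter, the dependence being uniform once the cusps are uniformly long. This yields the desired uniform positive lower bound on $h(\Ss_k)$.

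The principal obstacle is to verify that the two transfer theorems of \cite{BrM} deliver their estimates with constants depending only on data that are uniform in $k$: namely, the vertex degree (fixed at $3$), the isometry class of the ideal triangle $\mathcal{T}$ (the same for every $k$), and a uniform lower bound for the cusp lengths (which holds since $2^{n_k+1} \ge 8$). Once these uniformities are confirmed, the chain of implications $h(T_k) \ge h_0 \;\Rightarrow\; h(\Ss_k^\infty) \ge C_\infty \;\Rightarrow\; h(\Ss_k) \ge C$ completes the proof.
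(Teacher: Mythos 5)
Your proposal is correct and follows essentially the same route as the paper: the discrete Cheeger (Tanner/Alon--Milman) inequality turns the spectral gap of Theorem \ref{thm:main}(ii) into a uniform bound $h(T_k) \ge h_0 = (3-\sqrt{C+3})/2$, the Brooks--Makover transfer gives $h(\Ss_k^\infty) \ge c_h h_0$, and the large-cusp condition from Proposition \ref{prop:confcomp} lets one compare $h(\Ss_k^\infty)$ with $h(\Ss_k)$. The only slip is that you have the two citations interchanged: in \cite{BrM} it is Theorem 4.2 that relates $h(T_k)$ to $h(\Ss_k^\infty)$ and Theorem 3.3(a) that compares $h(\Ss_k^\infty)$ with the Cheeger constant of the conformal compactification $\Ss_k$.
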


\begin{proof} From a classical result by Tanner \cite{Tan} or
  Alon-Milman \cite{AM} we know that
  $$ \frac{\sigma(T_k)}{2} \le h(T_k) := \inf_E 
  \frac{\#(E)}{\min\{\#(A),\#(A')\}}, $$
  where $E \subset E_k$ runs through all collection of edges such that
  $T_k \backslash E_k$ disconnects into two components with disjoint
  vertex sets $A \subset V_k$ and $A' \subset V_k$. This implies
  together with Theorem \ref{thm:main}(ii) that the combinatorial
  Cheeger constants have the following uniform positive lower bound
  \begin{equation} \label{eq:h_0} 
  h_0 = \frac{3-\sqrt{C+3}}{2} \le h(T_k). 
  \end{equation}
  Moreover, it follows from Theorem 4.2 in \cite{BrM} that there are
  constants $c_h, C_h > 0$ such that we have the following relation
  between the respective Cheeger constants
  $$ c_h h(T_k) \le h({\mathcal S}_k^\infty) \le C_h h(T_k). $$
  From these facts we conclude that
  \begin{equation} \label{eq:Skinfest} 
  c_h h_0 \le h({\mathcal S}_k^\infty). 
  \end{equation}
  Moreover, it follows from Theorem 3.3(a) in \cite{BrM}, Proposition
  \ref{prop:confcomp} above, and the increasing cusp length properties
  of our surfaces ${\mathcal S}_k^\infty$ that, for every
  $\epsilon > 0$, there exists $k_0 \in {\mathbb N}$ such that
  \begin{equation} \label{eq:hSkSkinf} 
  \frac{1}{1+\epsilon} h({\mathcal S}_k) \le h({\mathcal S}_k^\infty)
  \le (1+\epsilon) h({\mathcal S}_k) 
  \end{equation}
  for all $k \ge k_0$. Combining, finally, \eqref{eq:Skinfest} and
  \eqref{eq:hSkSkinf} proves the corollary.
\end{proof}

\begin{rmk}
  We could have proved Corollary \ref{cor:lowerlambda1} alternatively
  by combining Corollary \ref{cor:cheegSk} and Cheeger's inequality
  for surfaces, but the proof given in Section \ref{subsec:corlambda}
  is much more direct.
\end{rmk}

\subsection{A lower eigenvalue estimate for the surfaces 
${\widehat {\mathcal S}}_k$}
\label{subsec:corlambda2}

It only remains to prove Corollary \ref{cor:lowerlambda2} from the
Introduction. The explicit construction of the surfaces
${\widehat{\mathcal S}}_k$ is explained in Buser \cite[Section
3.2]{Bu1}.

\begin{proof}[Proof of Corollary \ref{cor:lowerlambda2}]
The identity $2g-2 = |V_k|$ between the genus of the surface $\widehat
\Ss_k$ and the number of vertices of the trivalent graph $T_k$ is
easily checked. Moreover, every automorphism of the graph $T_k$
induces an isometry on $\widehat \Ss_k$. Since the graphs $T_k$ form a
power of coverings with powers of $2$ as covering indices, the same
holds true for the associated surfaces $\widehat \Ss_k$. We know from
\cite[(4.1)]{Bu1} that
$$ \lambda_1(\widehat \Ss_k) \ge \frac{1}{144\pi^2} h(T_k). $$
Combining this with \eqref{eq:h_0} leads to
$$ \lambda_1(\widehat \Ss_k) \ge \frac{3-\sqrt{C-3}}{288 \pi^2} $$
with the constant $C>0$ from Theorem \ref{thm:main}. This finishes the
proof of Corollary \ref{cor:lowerlambda2}.
\end{proof}

\end{document}